\definecolor{black}{rgb}{0.0, 0.0, 0.0}
\definecolor{red}{rgb}{1.0, 0.5, 0.5}
\newcommand{\margnote}[1]{
	\ifthenelse{\boolean{shownotes}}%
	{\marginpar{\raggedright\tiny\texttt{#1}}}%
	{}%
}
\newcommand{\hole}[1]{
	\ifthenelse{\boolean{shownotes}}%
	{\begin{center} \fbox{ \rule {.25cm}{0cm} \rule[-.1cm]{0cm}{.4cm}
				\parbox{.85\textwidth}{\begin{center} \texttt{#1}\end{center}} \rule
				{.25cm}{0cm}}\end{center}} {} }
\title[Marle model for polyatomic gases]
{Relativistic BGK model of Marle for polyatomic gases near equilibrium}
\author[Hwang]{Byung-Hoon Hwang}
\address[Byung-Hoon Hwang]{\newline Department of Mathematics Education\newline
	Sangmyung University, 20 Hongjimun 2-gil, Jongno-Gu, Seoul 03016, Republic of Korea}
\email{bhhwang@smu.ac.kr}
\numberwithin{equation}{section}
\newtheorem{theorem}{Theorem}[section]
\newtheorem{lemma}{Lemma}[section]
\newtheorem{proposition}{Proposition}[section]
\newtheorem{remark}{Remark}[section]
\newcommand{\R}{\mathbb R}
\newcommand{\N}{\mathbb N}
\newcommand{\F}{\mathcal F}
\newcommand{\I}{\mathcal I}
\newcommand{\T}{\mathbb T}
\newcommand{\bq}{\begin{equation}}
\newcommand{\eq}{\end{equation}}
\newcommand{\pa}{\partial}
\newcommand{\into}{\int_{\T^3}}
\newcommand{\intr}{\int_{\R^3}}
\newcommand{\inti}{\int_0^\infty}
\newcommand{\intro}{\iint_{\R^3 \times \T^3}}
\def\moverlay{\mathpalette\mov@rlay}
\def\mov@rlay#1#2{\leavevmode\vtop{%
		\baselineskip\z@skip \lineskiplimit-\maxdimen
		\ialign{\hfil$\m@th#1##$\hfil\cr#2\crcr}}}
\newcommand{\charfusion}[3][\mathord]{
	#1{\ifx#1\mathop\vphantom{#2}\fi
		\mathpalette\mov@rlay{#2\cr#3}
	}
	\ifx#1\mathop\expandafter\displaylimits\fi}
\newcommand*{\rom}[1]{\expandafter\@slowromancap\romannumeral #1@}
\begin{document}
	\allowdisplaybreaks
	
	\date{\today}
	
	\subjclass[2020]{76P05, 82C40, 83A05, 35Q82, 82D05}
	\keywords{Relativistic BGK model, Relativistic extended thermodynamics, Rarefied polyatomic gas, Nonlinear energy method}

	\begin{abstract} 
  In this paper, we consider the direct application of
  the relativistic extended thermodynamics theory of polyatomic gases developed in [Ann. Phys. 377 (2017) 414--445] to the relativistic BGK model proposed by Marle. We present the perturbed Marle model around the generalized J\"{u}ttner distribution and investigate the properties of the linear operator. Then we prove the global existence and large-time behavior  of classical solutions when the initial data is sufficiently  close to a global equilibrium.  
	\end{abstract}
	
	\maketitle \centerline{\date}

  \tableofcontents

	%
	%
	%
	%
	\setcounter{equation}{0}
	\section{Introduction}

\subsection{Relativistic BGK model for polyatomic gases}
The BGK model \cite{BGK} is the most widely known relaxation-time approximation of the celebrated Boltzmann equation. As the Boltzmann equation has been developed for various gas systems, the BGK model has also been proposed by many researchers. In the relativistic framework, two types of model equations are well known, proposed by Marle \cite{Mar3,Mar2} and by Anderson and Witting \cite{AW} respectively. The difference between them is the way they interpret macroscopic quantities. To be precise, the Marle model is based on the Eckart frame \cite{Eckart}, while the Anderson-Witting model adopts the Landau-Lifshitz frame \cite{LL}. Meanwhile, in 2017, Pennisi and Ruggeri \cite{PR2} developed a relativistic extended thermodynamics theory of rarefied polyatomic gases where the J\"{u}ttner distribution \cite{Juttner} (also called the relativistic Maxwellian) was generalized to polyatomic molecules for the first time. Using the generalized J\"{u}ttner distribution, they \cite{PR} suggested another relativistic BGK model of the Anderson-Witting type for both monatomic and polyatomic gases. These three types of BGK models have been applied to various fields related to the relativistic kinetic theory of gases in a complementary manner.

In this paper, we are interested in the direct application of
the relativistic extended thermodynamics \cite{PR2} to the relativistic BGK model of Marle \cite{Mar3,Mar2}. We let $F\equiv F(x^\mu , p^\mu,\mathcal{I} )$ be a distribution function describing the number density of a polyatomic gas on the phase space point $(x^\mu,p^\mu)$ with the internal energy $\mathcal{I}\ge 0$ due to the rotation and vibrations of particles.  The space-time coordinate $x^\mu$ and the four-momentum $p^\mu$ are given by
$$
x^\mu:=(ct,x)\in\mathbb{R}_+\times \T^3,\quad p^\mu:=(\sqrt{(mc)^2+|p|},p)\in\mathbb{R}_+\times \mathbb{R}^3,
$$
where $c$ is the speed of light and $m$ the rest mass of a particle, and  $\tau$ stands for the relaxation time in the rest frame. The Minkowski metric tensor $g_{\mu\nu}$ and its inverse $g^{\mu\nu}$  are given by 
$$
g_{\mu\nu}=g^{\mu\nu}=\text{diag}(1,-1,-1,-1),
$$
where Greek indices $\mu$ and $\nu$ run from $0$ to $3$.
We use the raising and lowering indices:
$$
g_{\mu\nu}p^\nu=p_\mu,\qquad g^{\mu\nu}p_\nu=p^\mu,
$$
which implies $p_\mu=(p^0,-p)$ by the Einstein convention. Accordingly, the Minkowski inner product is given by
$$
p^\mu q_\mu=p_\mu q^\mu=p^0q^0-\sum_{i=1}^3 p^iq^i.
$$
Next, we notice that the macroscopic descriptions of F rely on the particle-particle $V^\mu$ and the energy-momentum tensor $T^{\mu\nu}$ \cite{PR2}:
\begin{equation}\label{VT}
V^\mu=mc\int_0^\infty\int_{\mathbb{R}^3} p^\mu F \phi(\mathcal{I}) \,d\mathcal{I}\,\frac{dp}{p^0},\qquad T^{\mu\nu}=\frac{1}{mc} \int_0^\infty\int_{\mathbb{R}^3} p^\mu p^\nu F\left( mc^2 + \mathcal{I} \right)   \phi(\mathcal{I})\,d\mathcal{I}\,\frac{dp}{p^0},
\end{equation}
where $\phi(\mathcal{I})\ge 0$ is the state density so that  $\phi(\mathcal{I}) \, d  \mathcal{I}$ represents the number of the internal states of a molecule having the internal energy between $\mathcal{I}$ and $\mathcal{I}+d \mathcal{I}$. The form of $\phi(\mathcal{I})$ may be given in various ways depending on the physical context, but it must provide an exact non-relativistic limit. We refer to \cite[Sec 4.1]{PR2} for a discussion on the choice of $\phi$ and \cite{CPR1} for the monatomic limit. In this paper, we have set $\phi$ to the simple form
\begin{equation}\label{phi}
\phi(\mathcal{I})=\mathcal{I}^{\frac{D-2}{2}},
\end{equation}
where $D>0$ represents the contribution to the degrees of freedom related to the rotation and vibration of a polyatomic molecule. The integrability properties related to \eqref{VT} with \eqref{phi} can be found in \cite{CPR0}. Going back to \eqref{VT}, we decompose $V^\mu$ of \eqref{VT} based on the Eckart frame \cite{Eckart,PR2} as
 $$
 V^\mu =mnu^\mu
 $$
 where $n$ denotes the number density and $u^\mu=(\sqrt{c^2+|u|^2},u)\in \R_+\times \R^3$ the four-velocty, defined as
 \begin{align}\label{n,u}
 \begin{split}
 n^2&:=\frac{1}{(mc)^2}V^\mu V_\mu=\left(\inti\intr  F \phi(\mathcal{I}) \,d\mathcal{I}\,dp\right)^2-\sum_{i=1}^3\left(\inti\intr p^i F \phi(\mathcal{I}) \,d\mathcal{I}\,\frac{dp}{p^0}\right)^2,\cr 
 u^\mu &:=\frac{1}{mn}V^\mu=\frac {c}{n}\inti\intr p^\mu F \phi(\mathcal{I}) \,d\mathcal{I}\,\frac{dp}{p^0},
 \end{split}\end{align}
 respectively. 

Now we are ready to introduce the relativistic BGK model of Marle for polyatomic gases:
\begin{align}\label{PMarle}
\pa_t F+\frac{cp}{p^0}\cdot\nabla_x F&=\frac{cm}{\tau (1+\frac{\I}{mc^2})p^0}(\mathcal{F}_E-F).
\end{align}
Here $\mathcal{F}_E$ is the generalized J\"{u}ttner distribution derived in \cite{PR2}:
\begin{equation*} 
 \F_E(n,u,\gamma;p,\I)=\frac{n}{M(\gamma)}e^{-\left(1+\frac{\mathcal{I}}{mc^2}\right)\frac{\gamma}{mc^2} u^\mu p_\mu }
\end{equation*}
which describes the state of a gas in equilibrium depending on the parameters $n$, $u$, and $\gamma>0$.  It was shown in \cite[Sec.4]{PR2} that $\mathcal{F}_E$ is the distribution  maximizing  the relativistic entropy:  
$$
h=-ck_B  \inti\intr  u_\mu p^\mu F\ln F\phi(\I)\,d\I \frac{dp}{p^0}
$$
where $k_B$ is the Boltzmann constant. The  parameters $n$ and $u^\mu$ are given by \eqref{n,u}, and $\gamma:=\frac{mc^2}{k_BT}$ is the ratio between the rest energy of a particle and the product of the Boltzmann constant and the equilibrium temperature $T$. In order for \eqref{PMarle} to satisfy the fundamental properties of the kinetic theory of gases, $\gamma$ must be determined through the nonlinear relation:
\begin{align}\label{gamma}
\frac{\widetilde{M}(\gamma)}{M(\gamma)}&=\frac {1}{n}\inti\intr  F\Big(1+\frac{\I}{mc^2}\Big)^{-1}\phi(\mathcal{I}) \,d\mathcal{I} \frac{dp}{p^0},
\end{align}
where $M(\gamma)$ and $\widetilde{M}$ denote
\begin{align*}  
M(\gamma) &=\int_0^\infty\int_{\mathbb{R}^3}  e^{-\left(1+\frac{\mathcal{I}}{mc^2}\right)\frac{\gamma}{mc} p^0 }  \phi(\mathcal{I}) \,d\mathcal{I}\,dp,\cr 
\widetilde{M}(\gamma)&=\inti\intr  e^{-\left(1+\frac{\mathcal{I}}{mc^2}\right)\frac{\gamma}{mc}p^0 }\Big(1+\frac{\I}{mc^2}\Big)^{-1}\phi(\mathcal{I}) \,d\mathcal{I} \frac{dp}{p^0}. 
\end{align*}
Note from \cite{Hwang24} that there is a one-to-one correspondence between both sides of \eqref{gamma}, which guarantees that $\gamma$ is uniquely determined  by $F$. Due to the definition of $n,u^\mu$ and $\gamma$,  the generalized J\"{u}ttner distribution  $\F_E$ satisfies
\begin{align}\label{cancellation}
\begin{split}
\inti\intr \frac{1}{(mc^2+\I)}(\mathcal{F}_E-F)\phi(\I)\,d\I \frac{dp}{p^0}=0,\quad \inti\intr  p^\mu(\mathcal{F}_E-F)\phi(\I)\,d\I \frac{dp}{p^0}=0
\end{split}\end{align}
which enables us to obtain the conservation laws of $V^\mu$ and $T^{\mu\nu}$ for \eqref{PMarle}:
\begin{align*}
\pa_\mu V^\mu &:=m\inti\intr \{\pa_t F+ \frac{cp}{p^0}\cdot\nabla_x F \}  \phi(\mathcal{I}) \,d\mathcal{I}\,dp=0,\cr 
\pa_\nu T^{\mu\nu}&:=\frac{1}{mc^2} \inti\intr p^\mu \{\pa_t F+ \frac{cp}{p^0}\cdot\nabla_x F \}\left( mc^2 + \mathcal{I} \right)\phi(\mathcal{I})\,d\mathcal{I}\,dp=0,
\end{align*}
and the $H$-theorem:  
$$
\pa_\mu h^\mu:=  -k_B  \inti \intr\{ \pa_t+\frac{cp}{p^0}\cdot\nabla_x\} F\ln F\phi(\I)\,d\I dp \ge 0.
$$

\subsection{Main result}
The aim of this paper is to establish the global-in-time existence  and large-time behavior of unique smooth solution to \eqref{PMarle} when the initial data is sufficiently close to a global equilibrium state. To do this, we let $\F_0$ be a global equilibrium distribution defined by
$$
\mathcal{F}_0\equiv \mathcal{F}(1,0,\gamma_0; p,\I) =\frac{1}{M(\gamma_0)}e^{-\gamma_0 (1+\I)p^0},\quad \mbox{where } M(\gamma_0)=\int_0^\infty\int_{\mathbb{R}^3} e^{-\gamma_0(1+\I) p^0}\phi(\I)\,d\I dp,
$$
for some positive constant $\gamma_0$, where all
the physical constants are set to unity for notational simplicity. Decompose the distribution $F$ into 
$$
F(t,x,p,\I)=\mathcal{F}_0+\sqrt{\mathcal{F}_0}f(t,x,p,\I),\qquad F_0(x,p,\I):=F(t,x,p,\I)\big|_{t=0} =\mathcal{F}_0+\sqrt{\mathcal{F}_0}f_0(x,p,\I)
$$
where $f$ stands for the perturbation of $\F_0$ with initial data $f_0$. Substituting it into the Cauchy problem for the  Marle model \eqref{PMarle}, the perturbation $f$ verifies
\begin{align}\label{perturbed} \begin{split}
\pa_tf  +\frac{p}{p^0}\cdot\nabla_x f &=\frac{1}{(1+\I)p^0}\{\mathcal{P}_0(f)-f\}+\Gamma(f)\cr 
f_0(x,p,\I)&=f(0,x,p,\I),
\end{split}\end{align}
see Proposition \ref{linearization} for details. To introduce our main result, we define some notations and notational conventions. 
		\begin{itemize}
			\item Throughout this paper, we use the notation $C$ to denote generic positive constants. When it is necessary
			to reveal a dependency on specific quantities, we denote it as $C_{a,b,c\cdots}$.
			\item For functions $f(x,p,\I)$ and $g(p,\I)$, we denote by $\|f\|_{L^2}$, $\|g\|_{L^2}$ the usual $L^2\big(\T^3\times \R^3\times[0,\infty)\big)$ and $L^2\big(\R^3\times [0,\infty)\big)$ norms respectively. Also, we define weighted inner products
			\begin{align*}
			\langle f,g\rangle_{L^2_{p,\I}}&=\inti\intr f(p,\I) g(p,\I) \phi(\I) \,d\I dp,\cr  	
			\langle f,g\rangle_{L^2_{x,p,\I}}&=\inti\intro f(x,p,\I) g(x,p,\I) \phi(\I) \,d\I dpdx			
			\end{align*}
				and  weighted $L^2$ norms 
			\begin{align*}
			\|f\|^2_{L^2_{p,\I}}&=\inti\intr |f(p,\I)|^2\phi(\I)\,d\I  dp,\cr \|f\|^2_{L^2_{x,p,\I}}&=\inti\intro |f(x,p,\I)|^2\phi(\I)\,d\I dp dx.
			\end{align*}
			\item For $z=(z_1,\cdots,z_n)\in \R^n$, We use $\mathbb{P}(z)$ to denote a generic polynomial
			$$
			\mathbb{P}(z)=\sum_{i}C_{i}z^{\kappa_{1_i}}_{1}\cdots z^{\kappa_{n_i}}_{n}
			$$
			where $\kappa_{j_i}$ is a non-negative integer. 
			\item We use a multi-index $\alpha=[\alpha^{0},\alpha^{1},\alpha^{2},\alpha^{3}]$ to denote 
			$$
			\partial^{\alpha}=\partial_{t}^{\alpha^{0}}\partial_{x^{1}}^{\alpha^{1}}\partial_{x^{2}}^{\alpha^{2}}\partial_{x^{3}}^{\alpha^{3}},
			$$
			where the length of $\alpha$ is defined by $|\alpha|:=\alpha^0+\alpha^1+\alpha^2+\alpha^3$.
			\item We define the energy functional $\mathcal{E}(f)(t)$ by
			\begin{align*}
			\mathcal{E}(f)(t)=\sum_{|\alpha| \le N}\|\partial^{\alpha} f\|^{2}_{L^2_{x,p,\I}}.
			\end{align*}
		\end{itemize}
Then the main result of this paper is stated as follows.
	\begin{theorem}\label{main}
		Let $N\geq 3$. Assume that the initial data $F_0$ is non-negative and it shares the same total mass, momentum, and energy as the global equilibrium $\mathcal{F}_0$, i.e. 
		$$
		\inti\intro f_{0}\sqrt{\mathcal{F}_0} \phi(\I)\,d\I dpdx=0,\quad \inti\intro  (1+\I)p^\mu f_{0}\sqrt{\mathcal{F}_0} \phi(\I)\,d\I dpdx=0.
		$$  If $ \mathcal{E}(f_0)(t)$ is sufficiently small, then there exists a global-in-time unique solution $F(t,x,p,\I)\ge 0$ to the relativistic BGK model \eqref{PMarle} satisfying
		\begin{align*}
\mathcal{E}(f)(t)\le e^{-\lambda_0 t}\mathcal{E}(f_0)
		\end{align*}
for some positive constant $\lambda_0$.	
	\end{theorem}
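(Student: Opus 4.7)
The plan is to carry out the nonlinear energy method of Guo type, adapted to the polyatomic Marle operator \eqref{PMarle}. First I would study the linearized collision operator
$$Lf:=\frac{1}{(1+\I)p^0}\bigl(\mathcal{P}_0(f)-f\bigr),$$
whose detailed structure is supplied by Proposition \ref{linearization}. By construction, $\mathcal{P}_0$ should be the projection onto the five-dimensional kernel spanned by the collision invariants $\sqrt{\F_0}$ and $(1+\I)p^\mu\sqrt{\F_0}$ (for $\mu=0,\dots,3$), in agreement with the cancellation identities \eqref{cancellation}. A Cauchy--Schwarz argument in the weighted inner product $\langle\cdot,\cdot\rangle_{L^2_{p,\I}}$ then yields the microscopic coercivity
$$-\langle Lf,f\rangle_{L^2_{p,\I}}\;\gtrsim\;\|\sqrt{\nu}\,(I-\mathcal{P}_0)f\|_{L^2_{p,\I}}^2,\qquad \nu(p,\I):=\frac{1}{(1+\I)p^0}.$$

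Next I would apply $\pa^\alpha$ to the perturbed equation \eqref{perturbed} for every $|\alpha|\le N$, pair with $\pa^\alpha f$ in $L^2_{x,p,\I}$, and sum. The free transport term vanishes after integration on $\T^3$, so together with the coercivity above this gives
$$\frac{1}{2}\frac{d}{dt}\mathcal{E}(f)+c_0\sum_{|\alpha|\le N}\|\sqrt{\nu}(I-\mathcal{P}_0)\pa^\alpha f\|_{L^2_{x,p,\I}}^2\le \sum_{|\alpha|\le N}\langle \pa^\alpha\Gamma(f),\pa^\alpha f\rangle_{L^2_{x,p,\I}}.$$
The nonlinear term $\Gamma(f)$ arises from Taylor expanding $\F_E(n,u,\gamma;p,\I)$ around the global state $(1,0,\gamma_0)$; the subtle point is that $\gamma$ depends nonlinearly on $f$ through the implicit relation \eqref{gamma}, whose smooth invertibility is exactly the content of \cite{Hwang24}. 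Exploiting the polynomial structure $\mathbb{P}(f)$ multiplied by $\sqrt{\F_0}$-decay factors, together with the Sobolev embedding $H^N(\T^3)\hookrightarrow L^\infty(\T^3)$ valid for $N\ge 3$, the right-hand side should be bounded by $C\sqrt{\mathcal{E}(f)}$ times the dissipation and hence absorbed under the smallness assumption on $\mathcal{E}(f)$.

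The main obstacle is the \emph{macro-micro gap}: the microscopic dissipation only controls $(I-\mathcal{P}_0)\pa^\alpha f$, whereas $\mathcal{E}(f)$ also contains the hydrodynamic part $\mathcal{P}_0 \pa^\alpha f$. To close the estimate I would extract, from the perturbed equation \eqref{perturbed}, a coupled system of balance laws for the five macroscopic coefficients of $\mathcal{P}_0 f$ and then run an interpolation/Poincar\'e-type argument on $\T^3$; the assumption that $f_0$ carries no mass, four-momentum, or energy (and the corresponding conservation laws propagating it in time) supplies the zero-mean condition that makes the Poincar\'e step work. Combining the resulting macroscopic estimate with the microscopic coercivity and the nonlinear absorption produces a Lyapunov inequality
$$\frac{d}{dt}\mathcal{E}(f)+\lambda_0\,\mathcal{E}(f)\le 0,$$
from which the claimed exponential decay follows by Gr\"onwall. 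Global existence is then obtained by continuing a standard local-in-time solution by means of this uniform a priori bound, and non-negativity of $F=\F_0+\sqrt{\F_0}f$ is propagated through the mild (Duhamel) representation of \eqref{PMarle}.
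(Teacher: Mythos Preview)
Your outline follows the standard Guo energy method, and most of the ingredients (linearization, nonlinear estimates via Sobolev embedding, local existence plus continuation, non-negativity via the mild formulation) are indeed the ones the paper uses. However, there is one genuine structural error that would make the macro--micro closure fail.

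You assume that ``$\mathcal{P}_0$ should be the projection onto the five-dimensional kernel spanned by the collision invariants $\sqrt{\F_0}$ and $(1+\I)p^\mu\sqrt{\F_0}$''. It is not. Looking at the explicit formula for $\mathcal{P}_0$ in Proposition \ref{linearization}, its coefficients are moments of $f$ against $\{\,1,\ p/p^0,\ 1/((1+\I)p^0)\,\}$, whereas the conservation laws \eqref{cancellation} fix the moments against $\{\,1,\ (1+\I)p^\mu\,\}$. So although $Ker(\mathcal{L})$ is indeed spanned by $\sqrt{\F_0}$ and $(1+\I)p^\mu\sqrt{\F_0}$, the map $\mathcal{P}_0$ is \emph{not} the $L^2_{p,\I}$-orthogonal projection onto that kernel. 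The practical consequence is precisely at your Poincar\'e step: the zero-mean conditions propagated by the conservation laws are conditions on $\langle f,\sqrt{\F_0}\rangle$ and $\langle f,(1+\I)p^\mu\sqrt{\F_0}\rangle$, not on the coefficients appearing in $\mathcal{P}_0 f$, so you cannot control the lowest-order macroscopic part of $\mathcal{P}_0 f$ by Poincar\'e. This is exactly the difficulty flagged in the Remark following Theorem \ref{main}.

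The paper's fix is to introduce a \emph{second} projection $\mathcal{P}$, the genuine orthonormal projection onto $\mathrm{span}\{\sqrt{\F_0},(1+\I)p^\mu\sqrt{\F_0}\}$ in $\langle\cdot,\cdot\rangle_{L^2_{p,\I}}$, and to prove the coercivity in the form $\langle \mathcal{L}f,f\rangle_{L^2_{p,\I}}\le -\lambda\|(I-\mathcal{P})f\|_{L^2_{p,\I}}^2$ (no weight $\nu$, and with $\mathcal{P}$ rather than $\mathcal{P}_0$). This is not a Cauchy--Schwarz computation: since $\mathcal{P}\neq\mathcal{P}_0$, one has to argue indirectly, and the paper does so by a compactness/contradiction argument \`a la \cite{Guo03}, using that $\{(1+\I)p^0\}^{-1}\mathcal{P}_0$ is finite-rank (hence compact) and that $\mathcal{L}$ is self-adjoint with the stated kernel. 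Once coercivity is phrased with $\mathcal{P}$, the macro--micro system for the coefficients $(a_f,b_f,c_f)$ of $\mathcal{P}f$ is exactly compatible with the conservation laws, the Poincar\'e step goes through, and the rest of your outline (energy inequality, Gr\"onwall, continuation) is correct and matches the paper.
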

	\begin{remark}  Note from \cite{Hwang24} that the unique existence of the parameter $\gamma$ appearing in \eqref{gamma} is also guaranteed if the state density $\phi(\I)$ satisfies (1) $\phi(\I)$ does
not grow exponentially, (2) $\I\phi(\I)$=0 when $\I=0$, and (3) $\phi(\I)+\I\phi^\prime(\I)$ is governed by $(1+\I)\phi (\I)$ up to constant multiplication. This class of integrable functions $\phi(\I)$ seems also applicable to Theorem \ref{main} since it does not cause any integrability issues in obtaining technical lemmas due to the presence of the global equilibrium distribution $\F_0$.
	\end{remark}
\begin{remark}
The main  ingredient in the proof of Theorem \ref{main}  is to handle the degeneracy in the coercivity estimate of the linear operator 
by analyzing the so-called micro-macro decomposition developed in \cite{Guo whole,Guo VMB}. In the perturbed relativistic BGK model of Marle \eqref{perturbed}, the linear part is given by $\{(1+\I)p^0\}^{-1}\{\mathcal{P}_0(f)-f\}$ where $\mathcal{P}_0$ denotes 
\begin{align*}
&\mathcal{P}_0(f)=\left\{\int_0^\infty\int_{\mathbb{R}^3}  f\sqrt{\mathcal{F}_0} \phi(\mathcal{I}) \,d\mathcal{I}\,dp\right\}\sqrt{\F_0} +\gamma_0  \left\{\int_0^\infty\int_{\mathbb{R}^3} p  f\sqrt{\mathcal{F}_0}  \phi(\mathcal{I}) \,d\mathcal{I}\,\frac{dp}{p^0}\right\}\cdot (1+\I)p\sqrt{\F_0} \cr 
&+\frac{M^2(\gamma_0)}{M^2(\gamma_0)+M'(\gamma_0)\widetilde{M}(\gamma_0)}\biggl\{  \inti\intr  \biggl(\frac{1}{(1+\I)p^0}-\eta_0\biggl)f\sqrt{\mathcal{F}_0}  \phi(\I) d\I dp\biggl\}\biggl(\frac{M^\prime(\gamma_0 )}{M(\gamma_0 )}+(1+\I)  p^0\biggl)  \sqrt{\F_0} .
\end{align*}
Notice that the coefficients of $\mathcal{P}_0$ are moments with respect to $\{ 1/(1+\I)p^0, p/p^0 ,1\}$,  while the conserved quantities  consist of moments with respect to $\{1,(1+\I)p^\mu\}$, see Theorem \ref{thm8} (3). This incoherence makes it impossible to apply the Poincar\'{e} inequality to the coefficients of $P_0$ and hence the lowest order term in the micro-macroscopic decomposition cannot be controlled. To avoid this difficulty, we introduce an orthonormal projection $\mathcal{P}$ onto $span\{\sqrt{\F_0},(1+\I)p^\mu\sqrt{\F_0}\}$ that is consistent with the conservation laws, and show the coercive property of the linear operator without using $\mathcal{P}_0$:
$$
\langle L(f),f\rangle_{L^2_{p,\I}} \le -\lambda \|(I-\mathcal{P})f\|^2_{L^2_{p,\I}},
$$  
which is motivated by \cite[Lemma 3]{Guo03} where the classical solutions for the  Boltzmann equation was addressed under the Grad angular cutoff assumption.

\end{remark}
\subsection{Brief history}
The first mathematical study of the relativistic BGK model can be traced back to Bellouquid et al. \cite{BCNS} where the unique determination of equilibrium parameters, asymptotic limits, and the linearized problem were addressed for the monatomic Marle model. Then, Bellouquid et al. \cite{BNU} proved the global existence and large-time behavior of solutions when the initial data is sufficiently close to the global equilibrium distribution. The global existence of weak solutions was established by Calvo et al. \cite{CJS} where the velocity averaging lemma was generalized in the relativistic context. Hwang and Yun \cite{HY1} studied the stationary solutions in a slab by applying the Banach fixed point theorem. Recently, Sun \cite{Sun} proved the finite propagation speed of solutions for the Marle model near equilibrium. In the case of the relativistic BGK model of Anderson and Witting, we refer to \cite{Hwang,HY2} for the unique determination of equilibrium parameters and the near-equilibrium solutions, \cite{HY3} for the stationary problems, and \cite{HLY} for the explicit solutions to the Anderson-Witting model for massless particles in the FLRW spacetime. On the other hand, since the pioneering work of Pennisi and Ruggeri \cite{PR2}, the relativistic extended thermodynamics theory of rarefied polyatomic gases has been successfully developed, see \cite{CP,CPR0,CPR1,CPR2,CPR3,PR5,PR6}. However, except for the work by Hwang et al. \cite{HRY22} that proved the global existence and large-time behavior of near-equilibrium solutions for the model of Pennisi and Ruggeri, existence theory for the polyatomic BGK-type models has not been reported so far.

Compared to the relativistic BGK models, much more mathematical topics have been addressed for the relativistic Boltzmann equations:  For the spatially homogeneous case,  the propagation of upper bounds of solutions was studied in \cite{JSY,JY21} and the relativistic version of the Povzner inequality was derived in \cite{Strain Yun} respectively. In \cite{A,JY}, the regularizing effects of the collision operator were addressed. The existence of solutions with large data can be found in \cite{Dud3,Jiang2,Jiang1}. We also refer to  \cite{Cal,Strain2} for the Newtonian limits and \cite{Guo-Xiao21,SS} for the hydrodynamic limit.

 \noindent\newline

The rest of this paper is organized as follows.  In Section 2, we present the details of the linearization procedure of the Marle model for a polyatomic molecule. In Section 3, we provide the technical lemmas for estimating the nonlinear operator.  Section 4 is devoted to the proof of Theorem \ref{main}.
%
%
%
%
%
%
\section{Linearization around the global equilibrium}
Let
$$
F=\mathcal{F}_0+\sqrt{\mathcal{F}_0}f.
$$
where $\mathcal{F}_0$ represents the global equilibrium state, given by
$$
\mathcal{F}_0\equiv \mathcal{F}(1,0,\gamma_0; p,\I) =\frac{1}{M(\gamma_0)}e^{-\gamma_0 (1+\I)p^0},\qquad \mbox{where } M(\gamma_0)=\int_0^\infty\int_{\mathbb{R}^3} e^{-\gamma_0(1+\I) p^0}\phi(\I)\,d\I dp.
$$
and $f$ is the perturbation part.
The aim of this section is to analyze the linearized terms of \eqref{PMarle} with respect to $f$. For this, we first recall that
the J\"{u}ttner distribution $\mathcal{F}_E$ is a function of $(n,u,\gamma)$ where the variable $\gamma$ is in one-to-one correspondence with $F$ by the nonlinear relation \eqref{gamma}:
$$
\Big( \frac{\widetilde{M}}{M}\Big)(\gamma)=\frac{1}{n}\inti\intr  F(1+\I)^{-1} \phi(\I)\, d\I\frac{dp}{p^0}.
$$
Thus, letting 
$$
\eta(t,x):=\frac{1}{n}\inti\intr  F(1+\I)^{-1} \phi(\I)\, d\I\frac{dp}{p^0}\quad \mbox{and} \quad \mathcal{X}(z):=\left( \widetilde{M}/M\right)^{-1}(z),
$$
 one can rewrite \eqref{gamma} as
$$
\gamma(t,x)=\mathcal{X}\left(\eta(t,x)\right).
$$
This implies that $\mathcal{F}$ can be regarded as a function of $(n,u,\eta)$ without any risk. The new variable $\eta$ can be also founded in  \cite[Sec.4]{BCNS} where the relativistic BGK model of Marle was addressed for a monatomic molecule. Using this, we introduce the transitional distribution 
\begin{equation}\label{transitional}
\mathcal{F}_\theta:=\mathcal{F}(n_\theta, u_\theta,\eta_\theta;p,\I) \quad \mbox{with }(n_\theta, u_\theta,\eta_\theta;p)=\theta(n, u,\eta)+(1-\theta)(1,0,\eta_0)
\end{equation}
where $\eta_0$ is a constant defined by
\begin{equation}\label{eta0}
\eta_0=\inti\intr  \F_0(1+\I)^{-1} \phi(\I)\, d\I\frac{dp}{p^0}\equiv \frac{\widetilde{M}(\gamma_0)}{M(\gamma_0)}.
\end{equation}
\begin{proposition}\label{linearization}
	Let $
	F=\mathcal{F}_0+\sqrt{\mathcal{F}_0}f$. Then the relativistic BGK model \eqref{PMarle} can be written as
	\begin{align}\label{target_linearized}\begin{split}
	 \pa_tf  +\frac{p}{p^0}\cdot\nabla_x f =\frac{1}{(1+\I)p^0}\left\{\mathcal{P}_0(f)-f \right\}+\Gamma(f).
	\end{split}\end{align}
	Here $\mathcal{P}_0$ is the operator defined as
\begin{align*}
\mathcal{P}_0(f)&=\left\{\int_0^\infty\int_{\mathbb{R}^3}  f\sqrt{\mathcal{F}_0} \phi(\mathcal{I}) \,d\mathcal{I}\,dp\right\}\sqrt{\F_0}+\gamma_0  \left\{\int_0^\infty\int_{\mathbb{R}^3} p  f\sqrt{\mathcal{F}_0}  \phi(\mathcal{I}) \,d\mathcal{I}\,\frac{dp}{p^0}\right\}\cdot p\left(1+\I\right) \sqrt{\F_0}\cr 
&+\frac{M^2(\gamma_0)}{M^2(\gamma_0)+M'(\gamma_0)\widetilde{M}(\gamma_0)}\left\{  \inti\intr  \left(\frac{1}{(1+\I)p^0}-\eta_0\right)f\sqrt{\mathcal{F}_0}  \phi(\I)\, d\I dp\right\}\left(\frac{M^\prime(\gamma_0 )}{M(\gamma_0 )}+(1+\I)  p^0\right)\sqrt{\F_0}
\end{align*}
	and 	$\Gamma$ is the nonlinear operator with respect to $f$ defined as
\begin{align}\label{Gamma}
\begin{split}
\Gamma(f)&=\frac{N_n \sqrt{\F_0}}{(1+\I)p^0}+\gamma_0  N_u\cdot \frac{p}{p^0} \sqrt{\F_0}+\frac{M^2(\gamma_0)}{M^2(\gamma_0)+M'(\gamma_0)\widetilde{M}(\gamma_0)}\left(\frac{1}{(1+\I)p^0}\frac{M^\prime(\gamma_0 )}{M(\gamma_0 )}+1\right)  N_\eta\sqrt{\F_0}\cr 
&+\frac{1}{(1+\I)p^0\sqrt{\F_0}}\int_0^1 (1-\theta) (n-1,u,\eta-\eta_0)\nabla^2_{(n_\theta, u_\theta,\eta_\theta)}\mathcal{F}_\theta(n-1,u,\eta-\eta_0)^T \,d\theta.
\end{split}
\end{align}
	where $N_n$, $N_u$ and $N_\eta$ are given by \eqref{Nn}, \eqref{Nu} and \eqref{Neta} respectively.
\end{proposition}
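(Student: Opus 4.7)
The overall strategy is a careful Taylor expansion of $\mathcal{F}_E$ around the global equilibrium parameters, combined with a linear/nonlinear decomposition of the macroscopic quantities $n$, $u$, $\eta$. Substituting $F=\mathcal{F}_0+\sqrt{\mathcal{F}_0}f$ into \eqref{PMarle} and dividing through by $\sqrt{\mathcal{F}_0}$ yields
$$
\pa_t f+\frac{p}{p^0}\cdot\nabla_x f=\frac{1}{(1+\I)p^0}\left(\frac{\mathcal{F}_E-\mathcal{F}_0}{\sqrt{\mathcal{F}_0}}-f\right),
$$
so the task reduces to expanding $(\mathcal{F}_E-\mathcal{F}_0)/\sqrt{\mathcal{F}_0}$ into a linear-in-$f$ piece (yielding $\mathcal{P}_0(f)$) plus a higher-order remainder (contributing to $\Gamma(f)$).

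First I would regard $\mathcal{F}_E$ as a smooth function of the variables $(n,u,\eta)$ by using the change of parameter $\gamma=\mathcal{X}(\eta)$, so that $\mathcal{F}_0=\mathcal{F}(1,0,\eta_0;p,\I)$. Then I would apply Taylor's theorem with integral remainder about $(1,0,\eta_0)$, writing
$$
\mathcal{F}_E=\mathcal{F}_0+\pa_n\mathcal{F}_0\,(n-1)+\pa_u\mathcal{F}_0\cdot u+\pa_\eta\mathcal{F}_0\,(\eta-\eta_0)+\int_0^1(1-\theta)Z^T\nabla^2_{(n_\theta,u_\theta,\eta_\theta)}\mathcal{F}_\theta Z\,d\theta,
$$
with $Z=(n-1,u,\eta-\eta_0)$ and $\mathcal{F}_\theta$ the transitional distribution in \eqref{transitional}. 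The partial derivatives at the global equilibrium can be computed directly: since $\mathcal{F}$ is linear in $n$ one has $\pa_n\mathcal{F}_0=\mathcal{F}_0$; using $u^\mu=(\sqrt{1+|u|^2},u)$ at $u=0$ gives $\pa_u(u^\mu p_\mu)|_{u=0}=-p$, hence $\pa_u\mathcal{F}_0=\gamma_0(1+\I)p\,\mathcal{F}_0$; and for the $\eta$-derivative I would use $\pa_\eta\mathcal{F}_0=\pa_\gamma\mathcal{F}_0\cdot\mathcal{X}'(\eta_0)$. Noting that $\widetilde{M}'(\gamma)=-M(\gamma)$ from the definition of $\widetilde{M}$, one gets $(\widetilde{M}/M)'(\gamma_0)=-(M^2+\widetilde{M}M')(\gamma_0)/M^2(\gamma_0)$, so that $\mathcal{X}'(\eta_0)=-M^2/(M^2+\widetilde{M}M')|_{\gamma_0}$. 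Combined with $\pa_\gamma\mathcal{F}_0=-\big((1+\I)p^0+M'(\gamma_0)/M(\gamma_0)\big)\mathcal{F}_0$, this produces exactly the coefficient appearing in the third summand of $\mathcal{P}_0$.

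The next step is to expand the macroscopic quantities themselves. From $V^0=1+\int f\sqrt{\mathcal{F}_0}\,\phi\,d\I dp$, $V^i=\int p^i f\sqrt{\mathcal{F}_0}\,\phi\,d\I dp/p^0$ and $n=\sqrt{(V^0)^2-|V|^2}$, the $\sqrt{\cdot}$ is expanded to first order to give a linear-in-$f$ piece $\int f\sqrt{\mathcal{F}_0}\,\phi$ and a remainder $N_n$; similarly for $u$ and, via the identity $\eta=n^{-1}\int F(1+\I)^{-1}\phi\,d\I dp/p^0$ together with $n^{-1}\approx 1-(n-1)_{\rm lin}$, the linear part of $\eta-\eta_0$ combines the two contributions into $\int f\sqrt{\mathcal{F}_0}\bigl((1+\I)^{-1}/p^0-\eta_0\bigr)\phi\,d\I dp$, with remainder $N_\eta$. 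Plugging the linear parts back into the Taylor expansion and dividing by $\sqrt{\mathcal{F}_0}$ reproduces precisely the three summands of $\mathcal{P}_0(f)$, while the remainders $N_n$, $N_u$, $N_\eta$ times the same derivative coefficients, added to the integral remainder term, form $\Gamma(f)$ as in \eqref{Gamma}.

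The main obstacle is bookkeeping: one must identify the linear-in-$f$ contributions cleanly after the $\sqrt{\cdot}$ expansion in $n$ and the compensation between $n^{-1}$ and the $(1+\I)^{-1}/p^0$ moment in $\eta$, and simultaneously verify that the implicit-function factor $\mathcal{X}'(\eta_0)$ matches the precise constant $M^2/(M^2+M'\widetilde{M})$ in the statement. Once these identifications are carried out and the derivatives $\pa_n\mathcal{F}_0$, $\pa_u\mathcal{F}_0$, $\pa_\eta\mathcal{F}_0$ are divided by $\sqrt{\mathcal{F}_0}$, the expressions for $\mathcal{P}_0(f)$ and $\Gamma(f)$ in Proposition \ref{linearization} follow by collecting terms; the quadratic Taylor remainder, divided by $(1+\I)p^0\sqrt{\mathcal{F}_0}$, is exactly the last integral summand in $\Gamma(f)$.
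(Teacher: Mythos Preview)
Your proposal is correct and follows essentially the same route as the paper: Taylor-expand $\mathcal{F}_E$ in $(n,u,\eta)$ about $(1,0,\eta_0)$ with integral remainder (the paper phrases this via the transitional family $\mathcal{F}_\theta$ and differentiation in $\theta$, which is the same computation), compute the three first-order derivatives exactly as you do, and then split $n-1$, $u$, $\eta-\eta_0$ into their linear-in-$f$ parts and the remainders $N_n,N_u,N_\eta$. Your identification of $\mathcal{X}'(\eta_0)$ via $\widetilde{M}'=-M$ and the resulting constant $M^2/(M^2+M'\widetilde{M})$ matches the paper's calculation precisely.
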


\begin{proof}
Since $\F_E$ is independent of $t$ and $x$, it suffices to show that 
$$
\frac{1}{\sqrt{\F_0}}(\mathcal{F}_E-F)=\mathcal{P}_0(f)-f+\Gamma(f)
$$
We first use the notation \eqref{transitional} to see that
	\begin{align*}
	\mathcal{F}_E-F&=  \mathcal{F}_E-\mathcal{F}_0-\sqrt{\mathcal{F}_0}f\cr 
	&\equiv   \mathcal{F}_1-\mathcal{F}_0-\sqrt{\mathcal{F}_0}f .
	\end{align*}
	Applying the Taylor expansion to $\mathcal{F}_1-\mathcal{F}_0$, one finds
	\begin{align*}
	\mathcal{F}_1-\mathcal{F}_0&=\frac{d}{d\theta} \left\{\mathcal{F}(n_\theta, u_\theta,\eta_\theta)\right\}\big|_{\theta=0}+\int_0^1 (1-\theta) \frac{d^2}{d\theta^2}\left\{\mathcal{F}(n_\theta, u_\theta,\eta_\theta)\right\} \,d\theta
	\end{align*}
	where 
	\begin{align}\label{theta}
	\frac{d}{d\theta} \left\{\mathcal{F}(n_\theta, u_\theta,\eta_\theta;p)\right\}\big|_{\theta=0}&=\frac{d \mathcal{F}_\theta}{dn_\theta}\biggl|_{\theta=0}(n-1)+\nabla_{u_\theta}\mathcal{F}_\theta\big|_{\theta=0}\cdot u+\frac{d \mathcal{F}_\theta}{d\eta_\theta}\biggl|_{\theta=0}(\eta-\eta_0)
	\end{align}
	and 
	$$
	\frac{d^2}{d\theta^2}\left\{\mathcal{F}(n_\theta, u_\theta,\eta_\theta;p)\right\}=(n-1,u,\eta-\eta_0)\nabla^2_{(n_\theta, u_\theta,\eta_\theta)}\mathcal{F}_\theta(n-1,u,\eta-\eta_0)^T.
	$$
It is straightforward that
	$$
	\frac{d \mathcal{F} }{dn }=\frac{1}{n }\mathcal{F} ,\quad \nabla_{u }\mathcal{F} =-\left(1+\I\right)\gamma\left(\frac{p^0}{u^0}u-p \right)\mathcal{F} , 
	$$
	and
	$$
	 \frac{d \mathcal{F} }{d\eta }=\frac{d\gamma }{d\eta }\frac{d \mathcal{F} }{d\gamma}	=\frac{M^2(\gamma)}{M^2(\gamma)+M'(\gamma)\widetilde{M}(\gamma)}\left(\frac{M^\prime(\gamma )}{M(\gamma )}+(1+\I)  u ^\mu p_\mu\right)  \mathcal{F} .
	$$
	where we used
	$$
	\frac{d\gamma }{d\eta }=\frac{1}{\big(  \widetilde{M}/M\big)^{\prime}(\gamma )}=-\frac{M^2(\gamma)}{M^2(\gamma)+M'(\gamma)\widetilde{M}(\gamma)}.
	$$
Inserting this into \eqref{theta}, we get
\begin{align}\label{theta2}
\begin{split}
	&\frac{d}{d\theta} \left\{\mathcal{F}(n_\theta, u_\theta,\eta_\theta;p)\right\}\big|_{\theta=0}\cr &=(n-1)\F_0+\left(1+\I\right)\gamma_0  u\cdot p \mathcal{F}_0+\frac{M^2(\gamma_0)}{M^2(\gamma_0)+M'(\gamma_0)\widetilde{M}(\gamma_0)}\left(\frac{M^\prime(\gamma_0 )}{M(\gamma_0 )}+(1+\I)  p^0\right)  (\eta-\eta_0)\mathcal{F}_0.
\end{split}\end{align}
Next, we decompose $n-1$, $u$ and $\eta-\eta_0$ into the linear parts and nonlinear parts with respect to $f$.
\newline
$\bullet$ (Linearization of $n-1$):	By definition, we have  
	\begin{align*}
	n&=\left\{\left(\int_0^\infty\int_{\mathbb{R}^3}  (\mathcal{F}_0+\sqrt{\mathcal{F}_0}f) \phi(\mathcal{I}) \,d\mathcal{I}\,dp\right)^2-\sum_{i=1}^3\left(\int_0^\infty\int_{\mathbb{R}^3} p^i (\mathcal{F}_0+\sqrt{\mathcal{F}_0}f) \phi(\mathcal{I}) \,d\mathcal{I}\,\frac{dp}{p^0}\right)^2 \right\}^{1/2}\cr
	&=\left\{\left(1+\int_0^\infty\int_{\mathbb{R}^3}  f\sqrt{\mathcal{F}_0} \phi(\mathcal{I}) \,d\mathcal{I}\,dp\right)^2-\sum_{i=1}^3\left(\int_0^\infty\int_{\mathbb{R}^3} p^i f\sqrt{\mathcal{F}_0} \phi(\mathcal{I}) \,d\mathcal{I}\,\frac{dp}{p^0}\right)^2 \right\}^{1/2}.
	\end{align*}
where we used the oddness of $\F_0$. Using the relation,
	$$
	\sqrt{1+y}=1+\frac y2-\frac{y^2}{2(2+y+2\sqrt{1+y})},
	$$
one can see that $n-1$ is linearized as	
	\begin{align}\label{n linear}
	n-1&=\int_0^\infty\int_{\mathbb{R}^3}  f\sqrt{\mathcal{F}_0} \phi(\mathcal{I}) \,d\mathcal{I}\,dp+N_n
	\end{align}
	where $N_n$ is the nonlinear part of $n$ defined by
	\begin{equation}\label{Nn}
\frac 12 \left(\int_0^\infty\int_{\mathbb{R}^3}  f\sqrt{\mathcal{F}_0} \phi(\mathcal{I}) \,d\mathcal{I}\,dp\right)^2-\frac 12\sum_{i=1}^3\left(\int_0^\infty\int_{\mathbb{R}^3} p^i f\sqrt{\mathcal{F}_0} \phi(\mathcal{I}) \,d\mathcal{I}\,\frac{dp}{p^0}\right)^2-\frac{\Phi^2}{2(2+\Phi+2\sqrt{1+\Phi})}
	\end{equation}	
	and $\Phi$ denotes
	$$
	\Phi:=2\int_0^\infty\int_{\mathbb{R}^3}  f\sqrt{\mathcal{F}_0} \phi(\mathcal{I}) \,d\mathcal{I}\,dp+ \left(\int_0^\infty\int_{\mathbb{R}^3}  f\sqrt{\mathcal{F}_0} \phi(\mathcal{I}) \,d\mathcal{I}\,dp\right)^2-\sum_{i=1}^3\left(\int_0^\infty\int_{\mathbb{R}^3} p^i f\sqrt{\mathcal{F}_0} \phi(\mathcal{I}) \,d\mathcal{I}\,\frac{dp}{p^0}\right)^2.
	$$
$\bullet$ (Linearization of $u$): Inserting \eqref{n linear} into the definition of $u$ gives
	\begin{align}\label{linear u}\begin{split}
	u &=\frac {1}{n}\int_0^\infty\int_{\mathbb{R}^3} p (\mathcal{F}_0+\sqrt{\mathcal{F}_0}f) \phi(\mathcal{I}) \,d\mathcal{I}\,\frac{dp}{p^0}\cr 
	&=\frac {1}{1+\int_0^\infty\int_{\mathbb{R}^3}  f\sqrt{\mathcal{F}_0} \phi(\mathcal{I}) \,d\mathcal{I}\,dp+N_n}\int_0^\infty\int_{\mathbb{R}^3} p  f\sqrt{\mathcal{F}_0}  \phi(\mathcal{I}) \,d\mathcal{I}\,\frac{dp}{p^0}\cr 
	&=\int_0^\infty\int_{\mathbb{R}^3} p  f\sqrt{\mathcal{F}_0}  \phi(\mathcal{I}) \,d\mathcal{I}\,\frac{dp}{p^0}+N_u
\end{split}	\end{align}
	where $N_u$ is the nonlinear part of $u$ defined by
	\begin{equation}\label{Nu}
	-\frac{\int_0^\infty\int_{\mathbb{R}^3}  f\sqrt{\mathcal{F}_0} \phi(\mathcal{I}) \,d\mathcal{I}\,dp+N_n}{1+\int_0^\infty\int_{\mathbb{R}^3}  f\sqrt{\mathcal{F}_0} \phi(\mathcal{I}) \,d\mathcal{I}\,dp+N_n} \int_0^\infty\int_{\mathbb{R}^3} p  f\sqrt{\mathcal{F}_0}  \phi(\mathcal{I}) \,d\mathcal{I}\,\frac{dp}{p^0}.
	\end{equation}
	$\bullet$ (Linearization of $\eta-\eta_0$): 
Using the relation
$$
\frac{1}{1+y}=1-y+\frac{y^2}{1+y},
$$
one finds
	\begin{align}\label{linear eta}\begin{split}
	\eta&=\frac{1}{n}\inti\intr  (\mathcal{F}_0+\sqrt{\mathcal{F}_0}f)(1+\I)^{-1} \phi(\I)\, d\I\frac{dp}{p^0}\cr 
	&=\frac {1}{1+\int_0^\infty\int_{\mathbb{R}^3}  f\sqrt{\mathcal{F}_0} \phi(\mathcal{I}) \,d\mathcal{I}\,dp+N_n} \left\{ \eta_0+\inti\intr  f\sqrt{\mathcal{F}_0} (1+\I)^{-1} \phi(\I)\, d\I\frac{dp}{p^0}\right\}\cr 
	&=\eta_0-\eta_0\int_0^\infty\int_{\mathbb{R}^3}  f\sqrt{\mathcal{F}_0} \phi(\mathcal{I}) \,d\mathcal{I}\,dp+   \inti\intr  f\sqrt{\mathcal{F}_0} (1+\I)^{-1} \phi(\I)\, d\I\frac{dp}{p^0} +N_\eta
	\end{split}\end{align}
where $N_\eta$ is the nonlinear part of $\eta$ defined by
\begin{equation}\label{Neta}
-\eta_0 N_n+\eta_0\frac{\Psi^2}{1+\Psi}-\frac{\Psi}{1+\Psi}\inti\intr  f\sqrt{\mathcal{F}_0} (1+\I)^{-1} \phi(\I)\, d\I\frac{dp}{p^0}
\end{equation}
and $\Psi$ denotes
$$
\Psi:=\int_0^\infty\int_{\mathbb{R}^3}  f\sqrt{\mathcal{F}_0} \phi(\mathcal{I}) \,d\mathcal{I}\,dp+N_n.
$$	
Finally, combining \eqref{theta2}, \eqref{n linear}, \eqref{linear u} and \eqref{linear eta} we conclude that
\begin{align*}
&\frac{1}{\sqrt{\F_0}}\left(\mathcal{F}_E-F\right)\cr 
&=\left\{\int_0^\infty\int_{\mathbb{R}^3}  f\sqrt{\mathcal{F}_0} \phi(\mathcal{I}) \,d\mathcal{I}\,dp\right\}\sqrt{\F_0}+\left(1+\I\right)\gamma_0  \left\{\int_0^\infty\int_{\mathbb{R}^3} p  f\sqrt{\mathcal{F}_0}  \phi(\mathcal{I}) \,d\mathcal{I}\,\frac{dp}{p^0}\right\}\cdot p \sqrt{\F_0}\cr 
&+\frac{M^2(\gamma_0)}{M^2(\gamma_0)+M'(\gamma_0)\widetilde{M}(\gamma_0)}\left\{  \inti\intr  \left(\frac{1}{(1+\I)p^0}-\eta_0\right)f\sqrt{\mathcal{F}_0}  \phi(\I)\, d\I dp\right\}\left(\frac{M^\prime(\gamma_0 )}{M(\gamma_0 )}+(1+\I)  p^0\right)\sqrt{\F_0}\cr 
&+N_n\sqrt{\F_0}+\left(1+\I\right)\gamma_0  N_u\cdot p \sqrt{\F_0}+\frac{M^2(\gamma_0)}{M^2(\gamma_0)+M'(\gamma_0)\widetilde{M}(\gamma_0)}\left(\frac{M^\prime(\gamma_0 )}{M(\gamma_0 )}+(1+\I)  p^0\right)  N_\eta\sqrt{\F_0}\cr 
&+\frac{1}{\sqrt{\F_0}}\int_0^1 (1-\theta) \frac{d^2}{d\theta^2}\left\{\mathcal{F}(n_\theta, u_\theta,\eta_\theta)\right\} \,d\theta,
\end{align*} 
which completes the proof.

\end{proof}

Let $\mathcal{N}$ be the five dimensional space spanned by $\{\sqrt{\F_0},(1+\I)p^\mu\sqrt{\F_0}\}$ and denote $e_i$ $(i=1,\cdots,5)$ by
$$
e_1=\sqrt{\F_0},\qquad  e_{1+i}=\frac{(1+\I)p^i\sqrt{\F_0}}{\|(1+\I)p^i\sqrt{\F_0}\|_{L^2_{p,\I}}},  \qquad e_5=\frac{\{(1+\I)p^0-\delta\}\sqrt{\F_0}}{\|\{(1+\I)p^0-\delta\}\sqrt{\F_0}\|_{L^2_{p,\I}}},
 $$
where $\delta$ is the constant defined by
$$
\delta=	\frac{\inti \intr (1+\I) p^0 \F_0\phi(\I)\, d\I dp}{\inti \intr    \F_0\phi(\I)\, d\I dp}. 
$$
By oddness and definition of $\delta$, $e_1,\cdots,e_5$ are mutually orthonormal with respect to the weighted inner product $\langle \cdot,\cdot\rangle_{L^2_{p,\I}}$. Then the following  operator 
\begin{align}\label{Pf}
\mathcal{P}(f)=\langle f,e_1\rangle_{L^2_{p,\I}} e_1+\langle f,e_{2,3,4} \rangle_{L^2_{p,\I}} \cdot e_{2,3,4}+ \langle f,e_{5}\rangle_{L^2_{p,\I}} e_5.
\end{align}
is an orthonormal projection from $L^2\big(\R^3\times [0,\infty)\big)$ onto  $\mathcal{N}$ with respect to the weighted inner product $\langle \cdot,\cdot\rangle_{L^2_{p,\I}}$. Let us define the linear operator $\mathcal{L}$ as
$$
\mathcal{L}(f):=\frac{1}{(1+\I)p^0}\{\mathcal{P}_0(f)-f\}.
$$
By Lemma \ref{linearization}, the relativistic BGK model \eqref{PMarle} can be rewritten by
\begin{equation}\label{linearized eqn}
\pa_tf  +\frac{p}{p^0}\cdot\nabla_x f =\mathcal{L}(f)+\Gamma(f).
\end{equation}
\begin{proposition} \label{linear property} The linearized operator $\mathcal{L}$ satisfies the followings.
	\begin{enumerate}
		\item $\mathcal{L}$ is self-adjoint with respect to $\langle \cdot,\cdot\rangle_{L^2_{p,\I}}$.
			\item $\displaystyle Ker(\mathcal{L})=\{\sqrt{\F_0},(1+\I)p^\mu\sqrt{\F_0}\}$.
		\item There exists a positive constant $\lambda$ such that
		$$
		\langle L(f),f\rangle_{L^2_{p,\I}} \le -\lambda \|(I-\mathcal{P})f\|^2_{L^2_{p,\I}}
		$$
	\end{enumerate}
\end{proposition}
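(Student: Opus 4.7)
The overall strategy is as follows. Parts (1) and (2) follow from algebraic identities among the integrals defining $\mathcal{P}_0$, while part (3) reduces, via (1) and (2), to a quantitative spectral gap on the orthogonal complement of $\mathcal{N}$ that I would establish by a contradiction/compactness argument. The main obstacle is this spectral gap, which is delicate precisely because the projection $\mathcal{P}$ onto $\mathcal{N}$ is orthogonal in $\langle\cdot,\cdot\rangle_{L^2_{p,\I}}$, whereas the natural projection $\mathcal{P}_0$ arising from $\mathcal{L}$ is orthogonal with respect to a different, $[(1+\I)p^0]^{-1}$-weighted inner product---this is exactly the mismatch of moments highlighted in the introductory remarks.

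For (1), I would expand $\langle\mathcal{L}(f),g\rangle_{L^2_{p,\I}}$ using the formula for $\mathcal{P}_0$. Setting $A_1(f)=\int f\sqrt{\F_0}\phi(\I)\,d\I dp$, $B_1(f)=\int f\sqrt{\F_0}\phi(\I)\,d\I dp/[(1+\I)p^0]$, $A_2(f)=\int p f\sqrt{\F_0}\phi(\I)\,d\I dp/p^0$, and $A_3(f)=B_1(f)-\eta_0 A_1(f)$, the contributions from $-f/[(1+\I)p^0]$ and $\gamma_0 A_2(f)\cdot A_2(g)$ are manifestly symmetric in $(f,g)$; the remaining bilinear combination in $A_1,B_1$ becomes symmetric precisely when $\alpha(1+\eta_0 M'(\gamma_0)/M(\gamma_0))=1$, which is equivalent to the definitions $\alpha=M^2(\gamma_0)/(M^2(\gamma_0)+M'(\gamma_0)\widetilde{M}(\gamma_0))$ and $\eta_0=\widetilde{M}(\gamma_0)/M(\gamma_0)$. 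For (2), I verify $\mathcal{P}_0(e)=e$ on each basis vector of $\mathcal{N}$ using the scalar identities $\int\F_0\phi\,d\I dp=1$, $\int\F_0\phi\,d\I dp/[(1+\I)p^0]=\eta_0$, $\delta=-M'(\gamma_0)/M(\gamma_0)$ (from differentiating $M$ at $\gamma_0$), the momentum identity $\gamma_0\int(p^i)^2(1+\I)\F_0\phi\,d\I dp/p^0=1$ (integration by parts in $p^i$ against $e^{-\gamma_0(1+\I)p^0}$), and oddness in $p^i$ for all cross terms; conversely $\mathcal{L}(f)=0$ forces $f=\mathcal{P}_0(f)\in\mathrm{Range}(\mathcal{P}_0)\subseteq\mathcal{N}$.

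For (3), self-adjointness combined with $\mathcal{P}(f)\in\mathcal{N}=\mathrm{Ker}(\mathcal{L})$ gives $\langle\mathcal{L}(f),\mathcal{P}(f)\rangle_{L^2_{p,\I}}=0$, while $\mathcal{P}_0|_{\mathcal{N}}=\mathrm{id}$ yields $(I-\mathcal{P}_0)f=(I-\mathcal{P}_0)(I-\mathcal{P})f$. Moreover the computations in (2) show $\mathcal{P}_0^2=\mathcal{P}_0$, so $\mathcal{P}_0$ is the orthogonal projection onto $\mathcal{N}$ with respect to the weighted inner product $\langle f,g\rangle_w:=\int fg\phi(\I)\,d\I dp/[(1+\I)p^0]$, producing
\begin{equation*}
\langle\mathcal{L}(f),f\rangle_{L^2_{p,\I}} = -\|(I-\mathcal{P}_0)(I-\mathcal{P})f\|_w^2.
\end{equation*}
The coercivity thus reduces to the spectral gap $\|(I-\mathcal{P}_0)g\|_w^2\ge\lambda\|g\|_w^2$ for $g$ satisfying $\mathcal{P}(g)=0$, which I would prove by contradiction: if such $g_n$ with $\|g_n\|_w=1$ satisfies $\|(I-\mathcal{P}_0)g_n\|_w\to 0$, then $\mathcal{P}_0(g_n)$ is bounded in the finite-dimensional space $\mathcal{N}$, so along a subsequence $\mathcal{P}_0(g_n)\to g_\infty\in\mathcal{N}$ in every norm and $g_n\to g_\infty$ in the $\|\cdot\|_w$-norm. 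Each constraint functional $g\mapsto\langle g,e_i\rangle_{L^2_{p,\I}}$ is continuous in the $\|\cdot\|_w$-norm by Cauchy--Schwarz, since $\int(1+\I)p^0 e_i^2\phi\,d\I dp<\infty$; passing to the limit in $\langle g_n,e_i\rangle_{L^2_{p,\I}}=0$ yields $g_\infty\perp_{L^2_{p,\I}}\mathcal{N}$, and combined with $g_\infty\in\mathcal{N}$ this forces $g_\infty=0$, contradicting $\|g_\infty\|_w=1$.
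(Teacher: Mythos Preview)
Your approach is essentially the paper's. Parts (1) and (2) are the same algebraic verifications: the paper carries out your symmetry check by explicitly rearranging $\{(1+\I)p^0\}^{-1}\mathcal{P}_0(f)$ into a manifestly symmetric form, and checks $\mathcal{P}_0(e)=e$ on each basis vector of $\mathcal{N}$ using exactly the integration-by-parts identity you cite for $(1+\I)p^i\sqrt{\F_0}$.

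For part (3) the strategies coincide---reduce to $\mathcal{N}^\perp$ via (1)--(2) and run a contradiction/compactness argument based on the finite rank of $\mathcal{P}_0$---but the execution differs slightly. You make explicit (and exploit) that $\mathcal{P}_0$ is the $\langle\cdot,\cdot\rangle_w$-orthogonal projection onto $\mathcal{N}$, which gives the clean identity $\langle\mathcal{L}(f),f\rangle_{L^2_{p,\I}}=-\|(I-\mathcal{P}_0)(I-\mathcal{P})f\|_w^2$ and lets you argue by strong convergence of $\mathcal{P}_0(g_n)$ in the finite-dimensional space $\mathcal{N}$; the paper instead takes a weak limit in $L^2_w$ and invokes compactness of the finite-rank operator $\{(1+\I)p^0\}^{-1}\mathcal{P}_0$. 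Your passage to the limit in the constraints $\langle g_n,e_i\rangle_{L^2_{p,\I}}=0$ via $\|\cdot\|_w$-continuity is the same mechanism the paper uses when it passes weak $L^2_w$-limits through \eqref{perp}. One point worth noting: your reduced spectral gap carries $\|g\|_w^2$ on the right-hand side, and this is precisely what both arguments deliver (the paper normalizes $\langle h_n,h_n\rangle_{L^2_w}=1$); neither argument upgrades this to the unweighted $\|(I-\mathcal{P})f\|_{L^2_{p,\I}}^2$ appearing in the proposition statement.
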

\begin{proof} 
$\bullet$ Proof of (1): Rearranging the operator $\mathcal{P}_0$, we have
	\begin{align}\label{rearrange}\begin{split}
	&\left\{ (1+\I)p^0\right\}^{-1}\mathcal{P}_0(f)\cr 
	&=\frac{M^\prime(\gamma_0 ) \left\{ M(\gamma_0) +\eta_0  M^\prime(\gamma_0 ) \right\}}{M^2(\gamma_0)+M'(\gamma_0)\widetilde{M}(\gamma_0)} \left\{ (1+\I)p^0\right\}^{-1}\sqrt{\F_0}  \iint  \left\{ (1+\I)p^0\right\}^{-1}f\sqrt{\mathcal{F}_0}  \phi(\I)\, d\I dp \cr 
	&+\gamma_0  \frac{p}{p^0} \sqrt{\F_0}\cdot \iint p  f\sqrt{\mathcal{F}_0}  \phi(\mathcal{I}) \,d\mathcal{I}\,\frac{dp}{p^0}\cr 
	&-\frac{M(\gamma_0)\widetilde{M}(\gamma_0)}{M^2(\gamma_0)+M'(\gamma_0)\widetilde{M}(\gamma_0)}\left(1+\frac{M^\prime(\gamma_0 )}{M(\gamma_0 )(1+\I)p^0 } \right)\sqrt{\F_0}\iint  \left(1+\frac{M^\prime(\gamma_0 )}{M(\gamma_0 )(1+\I)p^0 }  \right)f\sqrt{\mathcal{F}_0}  \phi(\I)\, d\I dp\cr 
	&+\sqrt{\F_0} \iint  \{(1+\I)p^0\}^{-1}f\sqrt{\mathcal{F}_0}  \phi(\I)\, d\I dp+\{(1+\I)p^0\}^{-1 }\sqrt{\F_0}\iint  f\sqrt{\mathcal{F}_0} \phi(\mathcal{I}) \,d\mathcal{I}\,dp.
\end{split}	\end{align}
Thus one finds
\begin{align}\label{P_0 self-adjoint}
\left\langle \{(1+\I)p^0\}^{-1}\mathcal{P}_0(f),g \right\rangle_{L^2_{p,\I}}=\left\langle f,\{(1+\I)p^0\}^{-1}\mathcal{P}_0(g) \right\rangle_{L^2_{p,\I}},
\end{align}	
which completes the proof of (1).\noindent\newline
$\bullet$ Proof of (2): By definition of $\mathcal{L}$, it suffices to show that 
\begin{equation}\label{P_0}
\mathcal{P}_0\left(\sqrt{\mathcal{F}_0}\right)=\sqrt{\mathcal{F}_0},\quad \mathcal{P}_0\left((1+\I)p^\mu\sqrt{\mathcal{F}_0}\right)=(1+\I)p^\mu\sqrt{\mathcal{F}_0}.
\end{equation}
For the case of $\sqrt{\F_0}$, we see that
\begin{align*}
\mathcal{P}_0\left(\sqrt{\mathcal{F}_0}\right)&=\left\{\int_0^\infty\int_{\mathbb{R}^3}  \mathcal{F}_0 \phi(\mathcal{I}) \,d\mathcal{I}\,dp \right\}\sqrt{\mathcal{F}_0}\cr 
&+\frac{M(\gamma_0)M^\prime(\gamma_0)}{M^2(\gamma_0)+M'(\gamma_0)\widetilde{M}(\gamma_0)}\left\{ \inti\intr   \left(\frac{1}{(1+\I)p^0}-\frac{\widetilde{M}(\gamma_0)}{M(\gamma_0)} \right)\mathcal{F}_0  \phi(\I)\, d\I dp  \right\}\sqrt{\mathcal{F}_0}\cr 
&+\frac{M^2(\gamma_0)}{M^2(\gamma_0)+M'(\gamma_0)\widetilde{M}(\gamma_0)}   \left\{ \inti\intr   \left(\frac{1}{(1+\I)p^0}-\frac{\widetilde{M}(\gamma_0)}{M(\gamma_0)} \right)\mathcal{F}_0  \phi(\I)\, d\I dp  \right\}(1+\I)  p^0 \sqrt{\mathcal{F}_0}\cr 
&= \sqrt{\mathcal{F}_0}
\end{align*}
where we used \eqref{eta0} and the fact that $\F_0$ is radially symmetric. For $p^i\sqrt{\F_0}$, we observe that
	\begin{align*}
\mathcal{P}_0\left((1+\I)p^i\sqrt{\mathcal{F}_0}\right)&=\gamma_0\left(1+ \mathcal{I} \right) p^i\sqrt{\mathcal{F}_0}  \int_0^\infty\int_{\mathbb{R}^3} (1+\I)|p^i|^2  \mathcal{F}_0  \phi(\mathcal{I}) \,d\mathcal{I}\,\frac{dp}{p^0}\cr 
&=\frac {\gamma_0}{3M(\gamma_0)}\left(1+ \mathcal{I} \right) p^i\sqrt{\mathcal{F}_0}  \int_0^\infty\int_{\mathbb{R}^3} (1+\I)\left\{(p^0)^2-1 \right\}  e^{-(1+\I)\gamma_0p^0}  \phi(\mathcal{I}) \,d\mathcal{I}\,\frac{dp}{p^0}. 
\end{align*}
Applying spherical coordinates and integration by parts gives
	\begin{align*}
\int_0^\infty\int_{\mathbb{R}^3}  (1+\I)p^0e^{-(1+\I)\gamma_0p^0}  \phi(\mathcal{I}) \,d\mathcal{I}dp&=4\pi\inti \inti (1+\I)r^2\sqrt{1+r^2}e^{-(1+\I)\gamma_0\sqrt{1+r^2}}\phi (\I)\,d\I dr\cr 
&=\frac{4\pi}{\gamma_0} \inti \inti (3r^2+1)e^{-(1+\I)\gamma_0\sqrt{1+r^2}}\phi (\I)\,d\I dr\cr 
&=\frac{3M(\gamma_0)}{\gamma_0} +\frac{4\pi}{\gamma_0} \inti \inti e^{-(1+\I)\gamma_0\sqrt{1+r^2}}\phi (\I)\,d\I dr 
\end{align*}
and
 \begin{align*}
	 \int_0^\infty\int_{\mathbb{R}^3}  (1+\I)e^{-(1+\I)\gamma_0p^0}  \phi(\mathcal{I}) \,d\mathcal{I}\,\frac{dp}{p^0}&= 4\pi\inti \inti \frac{r^2}{\sqrt{1+r^2}} (1+\I)e^{-(1+\I)\gamma_0\sqrt{1+r^2}}\phi (\I)\,d\I dr\cr 
	&=\frac{4\pi}{\gamma_0 } \inti \inti e^{-(1+\I)\gamma_0\sqrt{1+r^2}}\phi (\I)\,d\I dr.
	\end{align*}
Thus we get
	\begin{align*}
\mathcal{P}_0\left((1+\I)p^i\sqrt{\mathcal{F}_0}\right)&= \left(1+ \mathcal{I} \right) p^i\sqrt{\mathcal{F}_0}.
\end{align*}
Similarly, 
	\begin{align*}
	&\mathcal{P}\left((1+\I)p^0\sqrt{\mathcal{F}_0}\right)\cr 
	&=\left\{\int_0^\infty\int_{\mathbb{R}^3} (1+\I)p^0 \mathcal{F}_0 \phi(\mathcal{I}) \,d\mathcal{I}\,dp \right\}\sqrt{\mathcal{F}_0}\cr 
	&+\frac{M(\gamma_0)M^\prime(\gamma_0)}{M^2(\gamma_0)+M'(\gamma_0)\widetilde{M}(\gamma_0)}  \left\{ \inti\intr    \mathcal{F}_0    \phi(\I)\, d\I dp -\frac{\widetilde{M}(\gamma_0)}{M(\gamma_0)}\int_0^\infty\int_{\mathbb{R}^3} (1+\I)p^0\mathcal{F}_0 \phi(\mathcal{I}) \,d\mathcal{I}\,dp \right\}\sqrt{\mathcal{F}_0}\cr 
	&+\frac{M^2(\gamma_0)}{M^2(\gamma_0)+M'(\gamma_0)\widetilde{M}(\gamma_0)}   \left\{ \inti\intr    \mathcal{F}_0  \phi(\I)\, d\I dp -\frac{\widetilde{M}(\gamma_0)}{M(\gamma_0)}\int_0^\infty\int_{\mathbb{R}^3} (1+\I)p^0 \mathcal{F}_0 \phi(\mathcal{I}) \,d\mathcal{I}\,dp \right\}(1+\I)  p^0 \sqrt{\mathcal{F}_0}\cr 
	&=-\frac{M'(\gamma_0)}{M(\gamma_0)}\sqrt{\mathcal{F}_0}+\frac{M(\gamma_0)M^\prime(\gamma_0)}{M^2(\gamma_0)+M'(\gamma_0)\widetilde{M}(\gamma_0)}  \left\{ 1 +\frac{\widetilde{M}(\gamma_0)M'(\gamma_0)}{M^2(\gamma_0)} \right\}\sqrt{\mathcal{F}_0}\cr 
	&+\frac{M^2(\gamma_0)}{M^2(\gamma_0)+M'(\gamma_0)\widetilde{M}(\gamma_0)} \left\{ 1 +\frac{\widetilde{M}(\gamma_0)M'(\gamma_0)}{M^2(\gamma_0)} \right\}(1+\I)  p^0 \sqrt{\mathcal{F}_0}\cr 
	&=(1+\I)  p^0 \sqrt{\mathcal{F}_0}.
	\end{align*}
$\bullet$ Proof of (3): 
Since we have proved (1) and (2), it is sufficient to deal with functions that are not identically zero and belong to $\mathcal{N}^\perp$. Indeed, for any $f=g+h$ with $g\in \mathcal{N}$ and $h\in \mathcal{N}^\perp$, 
$$
\langle L(f),f\rangle_{L^2_{p,\I}} =\langle L(h),g+h\rangle_{L^2_{p,\I}}=\langle L(h),h\rangle_{L^2_{p,\I}},\qquad \|(I-\mathcal{P})f\|^2_{L^2_{p,\I}}=\|(I-\mathcal{P})h\|^2_{L^2_{p,\I}}.
$$
Suppose that for any $\epsilon>0$, there exists a function $g_\epsilon(p,\I)$ such that 
$$
\inti\int_{\mathbb{R}^3} g_\epsilon\sqrt{\F_0}\phi(\I)\,d\I dp=0,\quad \inti\int_{\mathbb{R}^3} 
(1+\I)p^\mu
 g_\epsilon\sqrt{\F_0}\phi(\I)\,d\I dp=0
$$
and 
$$
\left\langle \mathcal{L}(g_\epsilon),g_\epsilon\right\rangle_{L^2_{p,\I}} > -\epsilon\|(I-\mathcal{P})g_\epsilon\|^2_{L^2_{p,\I}}.
$$
Since $\|(I-\mathcal{P})g_\epsilon\|_{L^2_{p,\I}}$ is strictly positive, we consider the normalized functions $h_n(p,\I)$ such that for any $n\in\mathbb{N}$, 
\begin{equation}\label{perp}
\inti\int_{\mathbb{R}^3}h_n\sqrt{\F_0}\phi(\I)\,d\I dp=0,\quad \inti\int_{\mathbb{R}^3}(1+\I)p^\mu h_n\sqrt{\F_0}\phi(\I)\,d\I dp=0, 
\end{equation}
and 
\begin{align}\label{contrary}\begin{split}
\left \langle -\mathcal{L}(h_n),h_n\right\rangle_{L^2_{p,\I}}&=\left\langle h_n, h_n \right\rangle_{L^2_w}-\left\langle \mathcal{P}_0(h_n), h_n \right\rangle_{L^2_w}\cr 
&=1-\left\langle \mathcal{P}_0(h_n), h_n \right\rangle_{L^2_w}\cr 
&<\frac1n 
\end{split}\end{align}
where
$$
\langle f,g\rangle_{L^2_w}:=\left\langle \{(1+\I)p^0\}^{-1}f, g \right\rangle_{L^2_{p,\I}}
$$
Let $h_0$ be a weak limit of $\{h_n\}$ with respect to $\langle \cdot,\cdot \rangle_{L^2_w}$ up to a subsequence with $\left\langle h_0, h_0 \right\rangle_{L^2_w}\le  1$. Since the operator $\mathcal{P}_0$ is bounded and has a finite rank, $\{(1+\I)p^0\}^{-1}\mathcal{P}_0$ is compact in $L^2_{p,\I}$. Thus we have
$$
\lim_{n\to \infty}\langle \mathcal{P}_0(h_n),h_n\rangle_{L^2_w}=\langle \mathcal{P}_0(h_0),h_0\rangle_{L^2_w}.
$$
Also, we notice that
\begin{align}\label{nonnegative}\begin{split}
	\left\langle -\mathcal{L}(h_n),h_n\right\rangle_{L^2_{p,\I}}&=	\left\langle h_n-\mathcal{P}_0(h_n),h_n-\mathcal{P}_0(h_n)\right\rangle_{L^2_w}+\left\langle h_n,\mathcal{P}_0(h_n)\right\rangle_{L^2_w}-\left\langle \mathcal{P}_0(h_n),\mathcal{P}_0(h_n)\right\rangle_{L^2_w}\cr
	&=	\left\langle h_n-\mathcal{P}_0(h_n),h_n-\mathcal{P}_0(h_n)\right\rangle_{L^2_w}+\left\langle h_n,\mathcal{P}_0(h_n)\right\rangle_{L^2_w}-\left\langle h_n,\mathcal{P}_0(h_n)\right\rangle_{L^2_w}\cr
	&\ge 0 
\end{split}\end{align}
due to \eqref{P_0 self-adjoint} and \eqref{P_0}. Thus we have from  \eqref{contrary} that
$$
0=\lim_{n\to \infty}\left\langle -\mathcal{L}(h_n),h_n\right\rangle_{L^2_w}=1-\langle \mathcal{P}(h_0),h_0\rangle_{L^2_w}=1-\langle h_0,h_0\rangle_{L^2_w}+\langle -\mathcal{L}(h_0),h_0\rangle_{L^2_{p,\I}}\ge \langle -\mathcal{L}(h_0),h_0\rangle_{L^2_{p,\I}}.
$$
Since \eqref{nonnegative} also holds for $h_0$, we conclude that \begin{equation}\label{conclusion}
\left\langle \mathcal{L}(h_0),h_0\right\rangle_{L^2_{p,\I}}=0,\quad\ \langle h_0, h_0 \rangle_{L^2_w}=1.
\end{equation}
On the other hand, taking the limit $n\to\infty$ to \eqref{perp}, we obtain $h_0\in Ker(L)^\perp$. However \eqref{conclusion} implies that $h_0$ is not identically zero and belongs to $Ker(L)$, a contradiction. This completes the proof.
\end{proof}

\section{Technical lemmas}
The aim of this section is to establish the local-in-time existence and uniqueness of solutions of \eqref{PMarle}. For this, we first present some estimates in order to control the nonlinear term $\Gamma$.
%
%
%
%
\begin{lemma}\label{lem1}
Let $N\ge 3$.	Suppose that $\mathcal{E}(f)(t)$ is sufficiently small. We  then have
	$$
	\sum_{|\alpha|\le N}\left( |\pa^\alpha\{n-1\}|+|\pa^\alpha u|+|\pa^\alpha\{\eta-\eta_0\}|\right)\le C\sum_{|\alpha|\le N}\|\pa^\alpha f\|_{L^2_{p,\I}} \le C \sqrt{\mathcal{E}(f)(t)}.
	$$
\end{lemma}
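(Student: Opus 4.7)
The plan is to exploit the explicit decomposition of $n-1$, $u$, and $\eta-\eta_0$ into a linear moment of $f$ plus a nonlinear remainder that is already established in the proof of Proposition 2.1 (see \eqref{n linear}, \eqref{linear u}, \eqref{linear eta}), and then estimate each piece separately after differentiating.

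First, I would commute $\pa^\alpha$ with the $p$- and $\I$-integrals. For the linear parts, each is a moment of $f$ against a weight of the form $w(p,\I)\sqrt{\F_0}$ with $w \in \{1,\, p/p^0,\, (1+\I)^{-1}(p^0)^{-1}\}$. By Cauchy--Schwarz in $L^2_{p,\I}$,
\begin{align*}
\left|\pa^\alpha \iint w(p,\I)\,f\,\sqrt{\F_0}\,\phi(\I)\,d\I\,dp\right|
&\le \|\pa^\alpha f\|_{L^2_{p,\I}}\Big(\iint |w|^2\,\F_0\,\phi(\I)\,d\I\,dp\Big)^{1/2}
\le C\|\pa^\alpha f\|_{L^2_{p,\I}},
\end{align*}
since the $\F_0$-weighted integrals converge for each of the weights above. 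This produces the desired bound for the linear contributions.

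Next, for the nonlinear remainders $N_n$, $N_u$, $N_\eta$ defined in \eqref{Nn}, \eqref{Nu}, \eqref{Neta}, I would apply $\pa^\alpha$ and use the Leibniz rule together with the Sobolev embedding $H^2(\T^3)\hookrightarrow L^\infty(\T^3)$ (available since $N\ge 3$). The key point is that each $N_\ast$ is at least quadratic in the moments of $f$ (or a smooth function of them vanishing quadratically at $f=0$), so each term in $\pa^\alpha N_\ast$ splits into a product in which at most one factor is bounded in $L^2_x$ (giving $\|\pa^\alpha f\|_{L^2_{x,p,\I}}$ after Cauchy--Schwarz in $p,\I$) and all other factors are lower-order derivatives of moments of $f$, which are controlled in $L^\infty_x$ by $\sqrt{\mathcal{E}(f)(t)}$. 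The resulting bound is of the form $\mathbb{P}(\sqrt{\mathcal{E}(f)(t)})\cdot\|\pa^\alpha f\|_{L^2_{x,p,\I}}$, which, by smallness of $\mathcal{E}(f)(t)$, is absorbed into $C\|\pa^\alpha f\|_{L^2_{x,p,\I}}$.

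The main obstacle is to handle the rational expressions $\Phi^2/\bigl(2+\Phi+2\sqrt{1+\Phi}\bigr)$ in $N_n$, the quotient $(\cdots)/(1+\iint f\sqrt{\F_0}\phi\,d\I dp+N_n)$ in $N_u$, and $\Psi/(1+\Psi)$ in $N_\eta$: one must ensure the denominators stay bounded away from zero so that all derivatives are smooth functions of the moments of $f$. This is precisely where the smallness hypothesis is used: it forces $|\Phi|,|\Psi|\ll 1$ in $L^\infty_x$ via Sobolev embedding, so that $1+\Phi$ and $1+\Psi$ are uniformly bounded below. Once this is established, one can expand derivatives of these rational expressions by the chain and Leibniz rules, bound each factor in either $L^\infty_x$ or $L^2_x$ as above, and conclude. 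Combining the linear and nonlinear bounds for $n-1$, $u$, and $\eta-\eta_0$, and summing over $|\alpha|\le N$, gives the claimed estimate, with the final inequality following directly from the definition of $\mathcal{E}(f)(t)$.
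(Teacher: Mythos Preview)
Your proposal is correct and follows essentially the same approach as the paper: decompose $n-1$, $u$, $\eta-\eta_0$ into linear moments plus the quadratic remainders $N_n$, $N_u$, $N_\eta$ from Proposition~\ref{linearization}, apply Cauchy--Schwarz in $L^2_{p,\I}$ for the linear parts, and use the Leibniz rule together with the Sobolev embedding $H^2(\T^3)\hookrightarrow L^\infty(\T^3)$ (enabled by $N\ge 3$) and the smallness of $\mathcal{E}(f)(t)$ to bound the rational denominators away from zero. One minor point: the lemma is a pointwise-in-$x$ statement (the middle term is $\|\pa^\alpha f\|_{L^2_{p,\I}}(t,x)$, not $\|\pa^\alpha f\|_{L^2_{x,p,\I}}$), so in your Leibniz argument the highest-order factor should be left as $\|\pa^{\alpha'} f\|_{L^2_{p,\I}}$ rather than placed in $L^2_x$, exactly as the paper does.
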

\begin{proof}
Using the Sobolev embedding $H^2(\R^3)\subseteq L^\infty(\R^3)$, we get
	\begin{align}\label{Sobolev}
	\left| \int_{\mathbb{R}^3}\int_0^\infty p^0 f\sqrt{\mathcal{F}_0} \phi(\mathcal{I}) \,d\mathcal{I}\,dp\right|\le \left\| \|f\|_{L^2_{p,\I}} \right\|_{L^\infty(\T_x)} \left(\int_{\mathbb{R}^3}\int_0^\infty p^0 \mathcal{F}_0 \phi(\mathcal{I}) \,d\mathcal{I}\,dp\right)^{\frac 12}\le C \sqrt{\mathcal{E}(f)(t)}.
	\end{align}
Recalling \eqref{n linear}, \eqref{linear u} and \eqref{linear eta},
it is easy to check that $|\pa^\alpha\{n-1\}|,$ $|\pa^\alpha u|$ and $|\pa^\alpha\{\eta-\eta_0\}|$ take the form of rational functions whose denominators are dominated by positive constants due to \eqref{Sobolev}. For instance,
\begin{align*} 
\pa^\alpha\{	n-1\}&=\int_0^\infty\int_{\mathbb{R}^3}  \pa^\alpha f\sqrt{\mathcal{F}_0} \phi(\mathcal{I}) \,d\mathcal{I}\,dp+\pa^\alpha N_n
\end{align*}
where $N_n$ denotes
\begin{align*} 
N_n&=\frac 12 \left(\int_0^\infty\int_{\mathbb{R}^3}  f\sqrt{\mathcal{F}_0} \phi(\mathcal{I}) \,d\mathcal{I}\,dp\right)^2-\frac 12\sum_{i=1}^3\left(\int_0^\infty\int_{\mathbb{R}^3} p^i f\sqrt{\mathcal{F}_0} \phi(\mathcal{I}) \,d\mathcal{I}\,\frac{dp}{p^0}\right)^2-\frac{\Phi^2}{2(2+\Phi+2\sqrt{1+\Phi})}
\end{align*}	
and $\Phi$ is given by
$$
\Phi=2\int_0^\infty\int_{\mathbb{R}^3}  f\sqrt{\mathcal{F}_0} \phi(\mathcal{I}) \,d\mathcal{I}\,dp+ \left(\int_0^\infty\int_{\mathbb{R}^3}  f\sqrt{\mathcal{F}_0} \phi(\mathcal{I}) \,d\mathcal{I}\,dp\right)^2-\sum_{i=1}^3\left(\int_0^\infty\int_{\mathbb{R}^3} p^i f\sqrt{\mathcal{F}_0} \phi(\mathcal{I}) \,d\mathcal{I}\,\frac{dp}{p^0}\right)^2.
$$
Applying \eqref{Sobolev}, the denominator can be estimated as 
$$
\frac{1}{\left(2+\Phi+2\sqrt{1+\Phi}\right)^{2^{|\alpha|}}}\le \frac{1}{4-C\sqrt{\mathcal{E}(f)(t)}+4\sqrt{1-C\sqrt{\mathcal{E}(f)(t)}}} \le C
$$
 for sufficiently small $\mathcal{E}(f)(t)$. Thus, in the case of $\alpha=0$, we deduce that
$$
|n-1| \le C \sqrt{\mathcal{E}(f)(t)}.
$$
For the case of $\alpha\neq 0$,  we observe that
\begin{align*}
&\pa^\alpha \left(\int_0^\infty\int_{\mathbb{R}^3} p^0 f\sqrt{\mathcal{F}_0} \phi(\mathcal{I}) \,d\mathcal{I}\,dp\right)^2\cr 
&=\sum_{\substack{|\alpha'|\le |\alpha|\cr |\alpha'|\le |\alpha-\alpha'|}} C_{\alpha'}\int_0^\infty\int_{\mathbb{R}^3}p^0 \pa^{\alpha'} f\sqrt{\mathcal{F}_0} \phi(\mathcal{I}) \,d\mathcal{I}\,dp\int_0^\infty\int_{\mathbb{R}^3}p^0 \pa^{\alpha-\alpha'}  f\sqrt{\mathcal{F}_0} \phi(\mathcal{I}) \,d\mathcal{I}\,dp\cr 
&+\sum_{\substack{|\alpha'|\le |\alpha|\cr |\alpha-\alpha'|< |\alpha'|}} C_{\alpha'}\int_0^\infty\int_{\mathbb{R}^3}p^0 \pa^{\alpha'} f\sqrt{\mathcal{F}_0} \phi(\mathcal{I}) \,d\mathcal{I}\,dp\int_0^\infty\int_{\mathbb{R}^3}p^0 \pa^{\alpha-\alpha'}  f\sqrt{\mathcal{F}_0} \phi(\mathcal{I}) \,d\mathcal{I}\,dp\cr 
& \le C\sqrt{\mathcal{E}(f)(t)}\sum_{\substack{|\alpha'|\le |\alpha|\cr |\alpha'|\le |\alpha-\alpha'|}}   \|\pa^{\alpha-\alpha'}f\|_{L^2_{p,\I}}+C\sqrt{\mathcal{E}(f)(t)}\sum_{\substack{|\alpha'|\le |\alpha|\cr |\alpha-\alpha'|< |\alpha'|}}\|\pa^{\alpha^\prime} f\|_{L^2_{p,\I}}\cr 
&\le C\sqrt{\mathcal{E}(f)(t)}\sum_{|\alpha'|\le|\alpha|}   \|\pa^{\alpha'}f\|_{L^2_{p,\I}}.
\end{align*}
Here we can use \eqref{Sobolev} thanks to $N\ge 3$.  Similarly,
\begin{align*}
\pa^\alpha \left(\int_0^\infty\int_{\mathbb{R}^3}  f\sqrt{\mathcal{F}_0} \phi(\mathcal{I}) \,d\mathcal{I}\,dp\right)^n&\le C_n\left\{\mathcal{E}(f)(t)\right\}^{\frac {n-1}2}\sum_{|\alpha'|\le|\alpha|}   \|\pa^{\alpha'}f\|_{L^2_{p,\I}}
\end{align*}
for $n\in \mathbb{N}$. From these observations, we  conclude that
	\begin{align*} 
|\pa^\alpha\{	n-1\}|&\le \|\pa^\alpha f\|_{L^2_{p,\I}}+C\sqrt{\mathcal{E}(f)(t)}\sum_{|\alpha|\le N}   \|\pa^{\alpha}f\|_{L^2_{p,\I}}. 
\end{align*}
In the almost same manner, one finds
	\begin{align*} 
|\pa^\alpha	u|+|\pa^\alpha\{	\eta-\eta_0\}|&\le \|\pa^\alpha f\|_{L^2_{p,\I}}+C\sqrt{\mathcal{E}(f)(t)}\sum_{|\alpha|\le N}   \|\pa^{\alpha}f\|_{L^2_{p,\I}},
\end{align*}
which gives the desired result.

\end{proof}

\begin{remark}
	Recall that the transitional variables are defined as
	$$
	(n_\theta, u_\theta,\eta_\theta;p)=\theta(n-1, u,\eta-\eta_0)+(1,0,\eta_0).
	$$
	Thus it follows from Lemma \ref{lem1} that
	\begin{align}\label{theta estimate}
	|\pa^\alpha\{n_\theta-1\}|+|\pa^\alpha u_\theta|+|\pa^\alpha\{\eta_\theta-\eta_0\}|\le C \sqrt{\mathcal{E}(f)(t)}.
	\end{align}
\end{remark}
%
%
%
%

\begin{lemma}\label{Appendix}
The function $(\widetilde{M}/M)(\cdot)$ is increasing in $\gamma\in (0,\infty)$
\end{lemma}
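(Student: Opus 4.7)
The plan is to show $(\widetilde{M}/M)'(\gamma)>0$ by writing the numerator $\widetilde{M}'M-\widetilde{M}M'$ as a strict Cauchy--Schwarz inequality on the measure $\phi(\I)\,d\I\,dp$. First I would differentiate under the integral sign, which is legitimate because $\phi(\I)=\I^{(D-2)/2}$ grows polynomially while the exponential decays at infinity uniformly on compact $\gamma$-sets in $(0,\infty)$. The computation gives
$$
M'(\gamma)=-\inti\intr (1+\I)p^0\,e^{-(1+\I)\gamma p^0}\phi(\I)\,d\I\,dp,\qquad \widetilde{M}'(\gamma)=-M(\gamma),
$$
where the identity $\widetilde{M}'=-M$ uses the fact that the weights $(1+\I)^{-1}$ and $(p^0)^{-1}$ in $\widetilde{M}$ exactly cancel the factor $(1+\I)p^0$ produced by differentiating the exponential. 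In particular $M'(\gamma)<0$, so the quotient rule reduces the lemma to proving the pointwise inequality
$$
M(\gamma)^2 < \widetilde{M}(\gamma)\,|M'(\gamma)|.
$$

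Next I would apply Cauchy--Schwarz with the splitting
$$
e^{-(1+\I)\gamma p^0}=\bigl[((1+\I)p^0)^{-1/2}\,e^{-(1+\I)\gamma p^0/2}\bigr]\cdot\bigl[((1+\I)p^0)^{1/2}\,e^{-(1+\I)\gamma p^0/2}\bigr].
$$
Squaring the first bracket and integrating against $\phi(\I)\,d\I\,dp$ yields exactly $\widetilde{M}(\gamma)$; squaring the second yields $|M'(\gamma)|$; and the product integrated is $M(\gamma)$. Hence $M^2\le \widetilde{M}\,|M'|$ is immediate, and only strictness remains.

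Strictness is the sole mildly non-trivial point. Equality in Cauchy--Schwarz would force the two bracketed functions to be proportional on the support of the measure, which would require $(1+\I)p^0$ to be almost-everywhere constant. This is impossible since $p^0=\sqrt{1+|p|^2}$ varies with $p\in\R^3$ and $\I$ varies on $(0,\infty)$ with weight $\phi(\I)=\I^{(D-2)/2}$. Consequently the inequality is strict and $(\widetilde{M}/M)'(\gamma)>0$ for every $\gamma>0$. I do not foresee any genuine obstacle; the only care needed is to justify the interchange of differentiation and integration and to verify the bookkeeping of the three integrals produced by the Cauchy--Schwarz splitting.
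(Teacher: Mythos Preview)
Your proof is correct and follows essentially the same route as the paper: compute $\widetilde{M}'=-M$ and $M'$, then apply Cauchy--Schwarz (the paper says H\"older) to obtain $M^2<\widetilde{M}\,|M'|$, which is exactly the numerator inequality needed for $(\widetilde{M}/M)'>0$. Your version is in fact more detailed than the paper's, which simply invokes the inequality without writing out the splitting or discussing strictness.
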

\begin{proof}
	It is straightforward that $ \widetilde{M}^\prime(\gamma) =-M(\gamma)$ and
	\begin{align*}
	M^\prime(\gamma) =-\inti\intr  p^0(1+\I)e^{-\left(1+ \mathcal{I} \right) \gamma p^0 } \phi(\mathcal{I}) \,d\mathcal{I}  dp .
	\end{align*}
	Thus, it follows from the H\"{o}lder inequality that
	\begin{align}\label{huu}
	\left(\frac{\widetilde{M}(\gamma)}{M(\gamma)} \right)^{\prime}&=\frac{ \widetilde{M}^\prime(\gamma)  M(\gamma)-  M^\prime (\gamma)  \widetilde{M}(\gamma) }{\{M(\gamma)\}^2} >0
	\end{align}
	for $\gamma\in(0,\infty)$, which completes the proof.
\end{proof}

\begin{lemma}\label{lem5}
Under the same assumption in Lemma \ref{lem1}, there exist positive constants $C$ and $C_n$ such that
	$$
	C^{-1}\le \big| \mathcal{X}(\eta_{\theta})\big|\leq C,\quad \Big|\frac{d^n}{d\eta_\theta^n} \mathcal{X}(\eta_{\theta})\Big|\le C_n
	$$
\end{lemma}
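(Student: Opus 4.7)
The plan is to exploit the inverse function theorem together with the smoothness of $\widetilde{M}/M$ and the fact that the transitional variable $\eta_\theta$ is close to $\eta_0$ in the small-data regime. Recall that $\mathcal{X} = (\widetilde{M}/M)^{-1}$ and that $\eta_0 = (\widetilde{M}/M)(\gamma_0)$, so $\mathcal{X}(\eta_0) = \gamma_0 > 0$.

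First, I would invoke \eqref{theta estimate} from the preceding remark to get
$$
|\eta_\theta - \eta_0| \le C \sqrt{\mathcal{E}(f)(t)},
$$
so by choosing $\mathcal{E}(f)(t)$ sufficiently small we can force $\eta_\theta$ into an arbitrarily narrow closed interval around $\eta_0$. Next, differentiating under the integral sign is justified on any compact subinterval of $(0,\infty)$ because the exponential factor $e^{-(1+\I)\gamma p^0}$ dominates any polynomial factor in $p^0$ and $\I$ produced by repeated $\gamma$-differentiation; thus $M$ and $\widetilde{M}$ are $C^\infty$ on $(0,\infty)$, and so is $\widetilde{M}/M$. By Lemma \ref{Appendix} we have $(\widetilde{M}/M)'(\gamma_0) > 0$, so the inverse function theorem produces an open neighborhood $J$ of $\eta_0$ on which $\mathcal{X}$ is of class $C^\infty$ and $\mathcal{X}(\eta_0) = \gamma_0$.

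Shrinking $\mathcal{E}(f)(t)$ further if necessary, we guarantee that $\eta_\theta$ lies in a fixed compact subinterval $I \Subset J$. On $I$, the continuous function $\mathcal{X}$ attains positive upper and lower bounds, which yields $C^{-1} \le |\mathcal{X}(\eta_\theta)| \le C$. For the higher derivatives, I would compute $\mathcal{X}^{(n)}$ via Fa\`{a} di Bruno applied to the identity $(\widetilde{M}/M) \circ \mathcal{X} = \mathrm{id}$. This expresses each $\mathcal{X}^{(n)}(\eta)$ as a polynomial expression in $(\widetilde{M}/M)^{(k)}(\mathcal{X}(\eta))$, $k = 1, \ldots, n$, divided by a power of $(\widetilde{M}/M)'(\mathcal{X}(\eta))$. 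On $I$ the numerator is continuous and bounded, while the denominator is bounded below away from zero since $(\widetilde{M}/M)'(\gamma_0) > 0$ and $\mathcal{X}(I)$ is a compact subset of $(0,\infty)$ near $\gamma_0$. This yields the uniform estimate $|d^n \mathcal{X}/d\eta_\theta^n| \le C_n$.

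The main (and essentially the only) obstacle is verifying the smoothness-under-integral step with enough care, i.e. showing that the derivatives of $M$ and $\widetilde{M}$ of all orders exist and are continuous on $(0,\infty)$. This is routine: each differentiation in $\gamma$ introduces a factor bounded by $(1+\I)^n (p^0)^n$, which is absorbed by the exponential $e^{-(1+\I)\gamma p^0}$ uniformly for $\gamma$ in a compact subinterval of $(0,\infty)$, and the weight $\phi(\I) = \mathcal{I}^{(D-2)/2}$ causes no integrability issue thanks to the same exponential. Once this is in hand, every other step is a direct consequence of continuity on the compact set $I$.
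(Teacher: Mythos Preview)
Your proposal is correct and follows essentially the same route as the paper: localize $\eta_\theta$ near $\eta_0$ via the smallness of $\mathcal{E}(f)(t)$, use Lemma~\ref{Appendix} to ensure $(\widetilde{M}/M)'$ is positive (hence bounded away from zero on the relevant compact interval), and then express $\mathcal{X}^{(n)}$ as a rational expression in the derivatives of $\widetilde{M}/M$ at $\mathcal{X}(\eta_\theta)$. The only cosmetic differences are that the paper exploits the global monotonicity of $\widetilde{M}/M$ directly (rather than the inverse function theorem) to sandwich $\mathcal{X}(\eta_\theta)$ between $\mathcal{X}(\eta_0/2)$ and $\mathcal{X}(2\eta_0)$, and writes the higher-derivative step as an explicit induction rather than invoking Fa\`a di Bruno.
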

\begin{proof}

	By lemma \ref{Appendix}, we see that  $\mathcal{X}(\alpha)=(\widetilde{M}/M)^{-1}(\alpha)$ is an increasing function.
	Therefore, we have for small $\mathcal{E}(f)(t)$
	\begin{align}\label{lb1}
	\mathcal{X}(1/2\eta_0)\leq \mathcal{X}\big(\eta_0 -C\sqrt{\mathcal{E}(f)(t)}~\big)\leq\mathcal{X}(\eta_{\theta})\leq \mathcal{X}\big(\eta_0 +C\sqrt{\mathcal{E}(f)(t)}~\big)\leq \mathcal{X}(2\eta_0).
	\end{align}
	Note from \eqref{huu} that $(\widetilde{M}/M)^{\prime}$ is positive and is a continuous function on $\gamma\in (0,\infty)$ due to the definition of $M$ and $\widetilde{M}$, so  $(\widetilde{M}/M)^{\prime}$ has a positive lower bound and upper bound on the finite closed interval. Thus, due to \eqref{lb1} there exists a positive constant $C $ such that  
	\begin{align}\label{calX}
	C^{-1}\le 	\left|\mathcal{X}^{\prime}(\eta_{\theta})\right|&=\left|\left(\widetilde{M}/M\right)^{\prime}(\mathcal{X}(\eta_{\theta}))\right|^{-1}\le C.
	\end{align}
	On the other hand, letting $g(\mathcal{X}(\eta_\theta)):=(\widetilde{M}/M)^{\prime}(\mathcal{X}(\eta_\theta))$ then we see that
	$$
	\mathcal{X}^{(n)}(\eta_{\theta})=\left\{g(\mathcal{X}(\eta_{\theta})) \right\}^{-2^{n-1}} \mathbb{P}\left(g(\mathcal{X}(\eta_{\theta})),\cdots, g^{(n-1)}(\mathcal{X}(\eta_{\theta})),\mathcal{X}^\prime(\eta_{\theta}),\cdots,\mathcal{X}^{(n-1)}(\eta_{\theta}) \right)
	$$
	for some generic polynomial $\mathbb{P}$, where $g^{(k)}(\mathcal{X}(\eta_\theta))$  is bounded from above due to \eqref{lb1}. Thus by an induction over $n$ and \eqref{calX}, we conclude that there exists a positive constant $C_n$ such that
	$$
	\left| \mathcal{X}^{(n)}(\eta_{\theta})\right| \le C_n
	$$
\end{proof}

\begin{lemma}\label{lem55}
Under the same assumption in Lemma \ref{lem1}, there exist positive constants $C_{1}$ and $C_2$ depending on $n\in \N\cup \{0\}$
	such that
	\[
C_1^{-1}	\le \left| M^{(n)}(\mathcal{X}(\eta_{\theta}))\right|\leq C_{1},\quad C_2^{-1}	\le \left| \widetilde{M}^{(n)}(\mathcal{X}(\eta_{\theta}))\right|\leq C_{2}
	\]
\end{lemma}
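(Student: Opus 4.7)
The plan is to reduce the claim to the observation that $|M^{(n)}|$ and $|\widetilde{M}^{(n)}|$ are positive continuous functions of $\gamma$ on $(0,\infty)$, and that $\mathcal{X}(\eta_\theta)$ has already been localized to a compact subset of $(0,\infty)$ by Lemma \ref{lem5}. This is the same template used in the preceding lemma; here the intermediate $\mathcal{X}^{(n)}$ factors are replaced by explicit exponential integrals, which actually simplifies matters.

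The first step is to differentiate $M(\gamma)$ and $\widetilde M(\gamma)$ under the integral sign $n$ times. For any $0 < \gamma_1 < \gamma_2 < \infty$ and $\gamma\in[\gamma_1,\gamma_2]$, the integrand $e^{-(1+\I)\gamma p^0}\phi(\I)$ is dominated by $e^{-(1+\I)\gamma_1 p^0}\phi(\I)$, and multiplying by $[(1+\I)p^0]^n$ still yields an integrable majorant due to the exponential decay and the integrability properties of $\phi$ recalled in the introduction. Hence differentiation under the integral sign is legitimate and gives
\begin{align*}
M^{(n)}(\gamma) &= (-1)^n \inti\intr [(1+\I)p^0]^{n}\, e^{-(1+\I)\gamma p^0}\phi(\I)\, d\I\, dp,\\
\widetilde M^{(n)}(\gamma) &= (-1)^n \inti\intr [(1+\I)p^0]^{n-1}\, e^{-(1+\I)\gamma p^0}\phi(\I)\, d\I\, dp.
\end{align*}
In particular, $|M^{(n)}(\gamma)|$ and $|\widetilde M^{(n)}(\gamma)|$ are strictly positive and continuous in $\gamma$ on all of $(0,\infty)$.

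The second step is to invoke the localization \eqref{lb1} from Lemma \ref{lem5}: provided $\mathcal{E}(f)(t)$ is sufficiently small, one has
$$
\mathcal{X}(\eta_\theta)\in \bigl[\,\mathcal{X}(\eta_0/2),\, \mathcal{X}(2\eta_0)\,\bigr]\subset (0,\infty),
$$
which is a compact interval. Since the continuous positive functions $|M^{(n)}|$ and $|\widetilde M^{(n)}|$ attain strictly positive minimum and finite maximum on this compact set, there exist constants $C_1, C_2 > 0$, depending on $n$ and $\eta_0$, such that
$$
C_1^{-1}\le |M^{(n)}(\mathcal{X}(\eta_\theta))|\le C_1,\qquad C_2^{-1}\le |\widetilde M^{(n)}(\mathcal{X}(\eta_\theta))|\le C_2,
$$
which is exactly the statement of the lemma.

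There is no real obstacle here: the only subtlety is justifying differentiation under the integral sign, which is a routine dominated-convergence argument given the exponential factor and the integrability of $\phi$. Everything else is continuity plus compactness, exactly parallel to the reasoning in Lemma \ref{lem5}.
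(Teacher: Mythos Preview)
Your proof is correct and follows essentially the same route as the paper: both compute $M^{(n)}$ and $\widetilde M^{(n)}$ as exponential-weighted integrals and then use the localization $\mathcal{X}(\eta_\theta)\in[\mathcal{X}(\eta_0/2),\mathcal{X}(2\eta_0)]$ from Lemma~\ref{lem5}. The only cosmetic difference is that the paper plugs the bounds $C^{-1}\le \mathcal{X}(\eta_\theta)\le C$ directly into the exponential (using monotonicity of $\gamma\mapsto e^{-(1+\I)\gamma p^0}$) to obtain explicit integral bounds, whereas you phrase it as ``continuous positive function on a compact interval''; these are equivalent.
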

\begin{proof}
	By definition of $M$, we see that 
	$$
	 M^{(n)}(\mathcal{X}(\eta_{\theta})) =\inti\intr  \{- (1+\I)p^0\}^n  e^{-(1+\I) \mathcal{X}(\eta_{\theta})p^0}\phi(\I)\,d\I dp.
	$$
Applying Lemma \ref{lem5}, we conclude that there exists a positive constant $C_1$ such that
	\begin{align*}
	\left|M^{(n)}(\mathcal{X}(\eta_{\theta}))	\right|	&\le   \intr \inti  \{(1+\I) p^0\}^n  e^{-C^{-1} (1+\I)p^0}\phi(\I)\,d\I dp\le C_1
	\end{align*}
	and
		\begin{align*}
	\left|M^{(n)}(\mathcal{X}(\eta_{\theta}))	\right|	&\ge   \intr \inti  \{(1+\I) p^0\}^n  e^{-C (1+\I)p^0}\phi(\I)\,d\I dp\ge C_1^{-1}.
	\end{align*}
In the same manner, there exists a positive constant $C_2$ such that
$$
C_2^{-1}\le\left| \widetilde{M}^{(n)}(\mathcal{X}(\eta_{\theta})) \right|=\inti\intr  \{ (1+\I)p^0\}^{n-1}  e^{-(1+\I) \mathcal{X}(\eta_{\theta})p^0}\phi(\I)\,d\I dp\le C_2.
$$
\end{proof}
%
%
%
%


\begin{lemma}\label{nonlinear1}
	Under the same assumption in Lemma \ref{lem1}, we have
	$$
	\sum_{|\alpha|\le N}\left|\left\langle  \partial^{\alpha}\Gamma(f),g\right\rangle_{L^2_{p,\I}}\right|\le C\sqrt{\mathcal{E}(f)(t)}\sum_{|\alpha|\le N}\|\partial^{\alpha}f\|_{L^2_{p,\I}}\|g\|_{L^2_{p,\I}}.
	$$
\end{lemma}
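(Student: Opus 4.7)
The plan is to decompose $\Gamma(f)$ according to the four summands in \eqref{Gamma} and estimate each piece by isolating a quadratic macroscopic factor that is then bounded using Lemma \ref{lem1}, reserving Lemmas \ref{lem5}--\ref{lem55} to handle the coefficients that depend on $\mathcal{X}(\eta_\theta)$. Throughout, the basic mechanism is the following: after applying $\partial^\alpha$ by Leibniz, every resulting product will contain at least two small macroscopic factors (from the quadratic structure of $N_n, N_u, N_\eta$ and of the remainder). Placing all but one such factor in $L^\infty_x$ by Sobolev embedding (valid since $N\ge 3$) using \eqref{Sobolev} and \eqref{theta estimate}, and keeping the remaining factor in $L^2_x$, yields one factor of $\sqrt{\mathcal{E}(f)(t)}$ and one factor of $\|\partial^{\alpha'}f\|_{L^2_{p,\I}}$.

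For the first three summands, inspection of \eqref{Nn}, \eqref{Nu}, \eqref{Neta} shows that $N_n$, $N_u$, $N_\eta$ are rational functions in moments of $f\sqrt{\F_0}$ whose denominators are uniformly bounded away from zero by \eqref{Sobolev} for small $\mathcal{E}(f)(t)$, and whose numerators vanish to at least second order at $f=0$. Hence I can write each schematically as a finite sum of terms of the form $(\text{moment of }f)(\text{moment of }f)R(f)$, where $R(f)$ and its derivatives are uniformly bounded by Lemma \ref{lem1}. The $p,\I$-dependent factors $\sqrt{\F_0}/[(1+\I)p^0]$, $(p/p^0)\sqrt{\F_0}$, and $(M'(\gamma_0)/M(\gamma_0)+(1+\I)p^0)\sqrt{\F_0}$ appearing alongside $N_n, N_u, N_\eta$ all lie in $L^2_{p,\I}$, so pairing with $g$ via Cauchy--Schwarz after the above split closes the estimate for these three contributions.

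For the quadratic remainder term, I first compute $\nabla^2_{(n_\theta,u_\theta,\eta_\theta)}\mathcal{F}_\theta$ explicitly: each entry is a finite sum of terms of the form $P(p^\mu,1+\I)\cdot Q\bigl(M^{(k)}(\mathcal{X}(\eta_\theta)),\widetilde M^{(k)}(\mathcal{X}(\eta_\theta)),\mathcal{X}^{(k)}(\eta_\theta)\bigr)\cdot\mathcal{F}_\theta$, where $P$ is polynomial and $Q$ is a smooth function whose arguments are bounded uniformly in $\theta$ by Lemmas \ref{lem5}--\ref{lem55}. Dividing by $\sqrt{\F_0}$ produces $\mathcal{F}_\theta/\sqrt{\F_0}$, which by \eqref{theta estimate} and the lower bound on $\mathcal{X}(\eta_\theta)$ is pointwise dominated by $C\exp\bigl(-\beta(1+\I)p^0\bigr)$ for some $\beta>0$ once $\mathcal{E}(f)(t)$ is small. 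Thus, after multiplication by any polynomial weight in $(p^\mu,1+\I)$, the $(p,\I)$-factor sits in $L^2_{p,\I}$ uniformly in $\theta$. The two copies of $(n-1,u,\eta-\eta_0)$ supply the required quadratic macroscopic factor, and the argument proceeds as in the previous paragraph.

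The main technical obstacle is controlling $\partial^\alpha$ of the remainder uniformly in $\theta\in[0,1]$: Leibniz distributes derivatives among $(n-1,u,\eta-\eta_0)$, the coefficients $M^{(k)}, \widetilde M^{(k)}, \mathcal{X}^{(k)}$, and $\mathcal{F}_\theta$ itself, the latter producing iterated chain-rule terms in $\mathcal{X}(\eta_\theta)$ via Fa\`a di Bruno. That these iterated chain-rule terms remain bounded is exactly the content of Lemmas \ref{lem5} and \ref{lem55}, so the machinery is in place; organizing the resulting multilinear expression so that the overall estimate has exactly one factor of $\sqrt{\mathcal{E}(f)(t)}$ and one $L^2_{p,\I}$-norm of a derivative of $f$ is the main bookkeeping task, but it is routine once the quadratic structure is exposed.
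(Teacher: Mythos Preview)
Your proposal is correct and follows essentially the same approach as the paper: you split $\Gamma(f)$ into the three ``$N$'' terms plus the Hessian remainder, dispatch the first three by their quadratic structure in moments of $f$ exactly as in Lemma~\ref{lem1}, and for the remainder you compute the Hessian explicitly, invoke Lemmas~\ref{lem5}--\ref{lem55} to bound the $\mathcal{X}$- and $M^{(k)}$-dependent coefficients, and establish the exponential decay of $\mathcal{F}_\theta/\sqrt{\F_0}$ from \eqref{theta estimate}. The paper carries out precisely this program, writing the Hessian entries $\mathcal{Q}_{i,j}$ out in full and noting in addition that $M^2+M'\widetilde{M}<0$ strictly (by H\"older) so that the rational coefficients in $\mathcal{Q}_{i,j}$ have denominators bounded away from zero---a point you should make explicit when you expand your sketch, though it is effectively contained in the bound on $\mathcal{X}'(\eta_\theta)$ from Lemma~\ref{lem5}.
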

\begin{proof}
	
	Recall from \eqref{Gamma} that
	\begin{align*}
	\Gamma(f)&=\frac{N_n}{(1+\I)p^0}\sqrt{\F_0}+\gamma_0  N_u\cdot \frac{p}{p^0} \sqrt{\F_0}+\frac{M^2(\gamma_0)}{M^2(\gamma_0)+M'(\gamma_0)\widetilde{M}(\gamma_0)}\left(\frac{1}{(1+\I)p^0}\frac{M^\prime(\gamma_0 )}{M(\gamma_0 )}+1\right)  N_\eta\sqrt{\F_0}\cr 
	&+\frac{1}{(1+\I)p^0\sqrt{\F_0}}\int_0^1 (1-\theta) (n-1,u,\eta-\eta_0)\nabla^2_{(n_\theta, u_\theta,\eta_\theta)}\mathcal{F}_\theta(n-1,u,\eta-\eta_0)^T \,d\theta,
	\end{align*}
	where the first three terms can be handled in the almost same manner as in the proof of Lemma \ref{lem1}, so we only deal with the last term to avoid the repetition.   Notice from Lemma \eqref{lem1} that
	$$
\sum_{|\alpha|\le N}\left( |\pa^\alpha\{n-1\}|+|\pa^\alpha u|+|\pa^\alpha\{\eta-\eta_0\}|\right)\le C\sum_{|\alpha|\le N}\|\pa^\alpha f\|_{L^2_{p,\I}} \le C \sqrt{\mathcal{E}(f)(t)},
$$
so it is enough to show that 
$$
\left| \inti \intr \frac{1}{(1+\I)p^0\sqrt{\F_0}}\left|\pa^\alpha\nabla^2_{(n_\theta, u_\theta,\eta_\theta)}\mathcal{F}_\theta\right| g(p,\I) \phi(\I)\,d\I dp \right|\le \|g\|_{L^2_{p,\I}}.
$$
It is straightforward that
	$$
	\nabla_{(n_{\theta},u_{\theta},\eta_{\theta})}^{2}\F_{\theta}=\left[ \mathcal{Q}_{i,j}\right]_{1\le i,j\le 5}\F_{\theta},
	$$
where the Hessian matrix $\left[ \mathcal{Q}_{i,j}\right]$ is symmetric defined by
\begin{align*} 
\mathcal{Q}_{1,1}&=0,\cr
\mathcal{Q}_{1,1+i}&=-(1+\I)\frac{\mathcal{X}(\eta_{\theta})}{n_{\theta}}\left(\frac{p^{0}}{u^{0}_{\theta}}u_{\theta}^{i}-p^{i}\right),\cr
\mathcal{Q}_{1,5}&=\frac{1}{n_{\theta}}\left\{\frac{M^2}{M^2+M'\widetilde{M}}\right\}\left(\mathcal{X}(\eta_{\theta})\right)\biggl(\left\{\frac{M^{\prime} }{M}\right\}\left(\mathcal{X}(\eta_{\theta})\right)+(1+\I)u_{\theta}^{\mu}p_{\mu}\biggl),\cr
\mathcal{Q}_{1+i,1+i}&=(1+\I)^2\mathcal{X}^{2}(\eta_{\theta})\left(\frac{p^{0}}{u^{0}_{\theta}}u_{\theta}^{i}-p^{i}\right)^{2}+(1+\I)\frac{\mathcal{X}(\eta_{\theta})}{\left(1+|u_{\theta}|^{2}\right)^{\frac 32}}\left\{(u_{\theta}^{i})^{2}-(1+|u_{\theta}|^{2})\right\}p^{0},\cr
\mathcal{Q}_{1+i,1+j}&=(1+\I)^2\mathcal{X}^{2}(\eta_{\theta})\left(p^{i}-\frac{p^{0}}{u^{0}_{\theta}}u_{\theta}^{i}\right)\left(p^{j}-\frac{p^{0}}{u^{0}_{\theta}}u^j_{\theta}\right)+(1+\I)\frac{\mathcal{X}(\eta_{\theta})}{\left(1+|u_{\theta}|^{2}\right)^{\frac 32}}u_{\theta}^{i}u_{\theta}^{j}p^{0}\quad (i\neq j),\cr
\mathcal{Q}_{1+i,5}&=(1+\I)\left\{\frac{M^2}{M^2+M'\widetilde{M}}\right\}\left(\mathcal{X}(\eta_{\theta})\right)\left(\frac{p^{0}}{u^{0}_{\theta}}u_{\theta}^{i}-p^i\right)\cr 
&-(1+\I)\mathcal{X}(\eta_{\theta})\left(\frac{p^{0}}{u^{0}_{\theta}}u_{\theta}^{i}-p^{i}\right)\left\{\frac{M^2}{M^2+M'\widetilde{M}}\right\}\left(\mathcal{X}(\eta_{\theta})\right)\biggl(\left\{\frac{M^{\prime} }{M}\right\}\left(\mathcal{X}(\eta_{\theta})\right)+(1+\I)u_{\theta}^{\mu}p_{\mu}\biggl),\cr
\mathcal{Q}_{5,5}&=\left\{\frac{M^2}{M^2+M'\widetilde{M}}\right\}^2\left(\mathcal{X}(\eta_{\theta})\right)\biggl(\left\{\frac{M^{\prime} }{M}\right\}\left(\mathcal{X}(\eta_{\theta})\right)+(1+\I)u_{\theta}^{\mu}p_{\mu}\biggl)^2\cr
&-\left\{\frac{M^2}{M^2+M'\widetilde{M}}\right\}^{\prime}\left(\mathcal{X}(\eta_{\theta})\right)\left\{\frac{M^2}{M^2+M'\widetilde{M}}\right\}\left(\mathcal{X}(\eta_{\theta})\right)\biggl(\left\{\frac{M^{\prime} }{M}\right\}\left(\mathcal{X}(\eta_{\theta})\right)+(1+\I)u_{\theta}^{\mu}p_{\mu}\biggl)\cr  
&-\left\{\frac{M^2}{M^2+M'\widetilde{M}}\right\}^2\left(\mathcal{X}(\eta_{\theta})\right) \left\{\frac{M^{\prime} }{M}\right\}^{\prime}\left(\mathcal{X}(\eta_{\theta})\right) 
\end{align*}
Notice that $M^2+M^\prime \widetilde{M}$ is strictly negative:
\begin{align*}
&(M^2+M^\prime \widetilde{M})(\gamma)\cr 
&=\left(\iint e^{-\gamma(1+\I) p^0}\phi(\I)\,d\I dp \right)^2-\iint (1+\I)p^0e^{-\gamma(1+\I) p^0}\phi(\I)\,d\I dp\iint({1+\I})^{-1} e^{-\gamma(1+\I) p^0}\phi(\I)\,d\I \frac{dp}{p^0}\cr 
&< 0
\end{align*}
where we used the H\"{o}lder inequality. Since it is a continuous function, we have from Lemma \ref{lem5} that 
$$
\left|\frac{1}{(M^2+M^\prime \widetilde{M})(\mathcal{X}(\eta_\theta))}\right|\le C.
$$
This, combined with Lemma \ref{lem5} and Lemma \ref{lem55} implies that all the terms consisting of  $M^{(n)}$, $\widetilde{M}^{(n)}$ and $\mathcal{X}$ are bounded from above. On the other hand, it follows from \eqref{theta estimate} that
$$
n_\theta\approx 1, \quad u_\theta\approx0,\quad u_\theta^0\approx 1
$$
for sufficiently small $\mathcal{E}(f)(t)$. Therefore we can conclude that for  $1\le i,j\le 5$,
$$
\left| \mathcal{Q}_{i,j}\right|\le C\left\{1+(1+\I)p^0   \right\}^2
$$
and similarly, 
\begin{equation}\label{theta1}
\left| \pa^\alpha\mathcal{Q}_{i,j}\right|\le C\left\{1+(1+\I)p^0   \right\}^2
\end{equation}
Next, we observe from \eqref{theta estimate} that
\begin{align*}
\F_\theta&=\frac{n_\theta}{M(\mathcal{X}(\eta_\theta))}e^{-\left(1+\I\right)\mathcal{X}(\eta_\theta) u_\theta^\mu p_\mu }\cr 
&\le \frac{1+C\sqrt{\mathcal{E}(f)(t)}}{M\left(\eta_0+C\sqrt{\mathcal{E}(f)(t)}\right)}e^{- \left(1+\I\right)\mathcal{X}\left( \eta_0-C\sqrt{\mathcal{E}(f)(t)}\right) \left(\sqrt{1+|u_\theta|^2}-|u_\theta| \right)  p^0}\cr
&\le \frac{1+C\sqrt{\mathcal{E}(f)(t)}}{M\left(\eta_0+C\sqrt{\mathcal{E}(f)(t)}\right)}e^{- \left(1+\I\right)\mathcal{X}\left( \eta_0-C\sqrt{\mathcal{E}(f)(t)}\right) \left(\sqrt{1+|C\mathcal{E}(f)(t)|^2}-C\sqrt{\mathcal{E}(f)(t)} \right) p^0}
\end{align*}
where we used the fact that (1) $u_\theta^\mu p_\mu$ is strictly positive:
$$
u_\theta^\mu p_\mu=\sqrt{1+|u_\theta|^2}\sqrt{1+|p|^2} -u_\theta \cdot p\ge 1
$$
due to the Cauchy-Schwarz inequality, and (2) a function $h(y):=\sqrt{1+y^2}-y$ is strictly decreasing on $y\in [0,\infty)$. We take $\mathcal{E}(f)(t)$ sufficiently small so that
$$
C_*:=\mathcal{X}\left( \eta_0-C\sqrt{\mathcal{E}(f)(t)}\right) \left(\sqrt{1+|C\mathcal{E}(f)(t)|^2}-C\sqrt{\mathcal{E}(f)(t)} \right)-\frac 12\mathcal{X}(\eta_0) >0.
$$
We then have 
$$
\frac{\F_\theta}{\sqrt{\F_0}}\le C e^{-C_*(1+\I)p^0},
$$
and similarly,
\begin{equation}\label{theta3}
\left|\frac{\pa^{\alpha}\F_\theta}{\sqrt{\F_0}}\right|\le C\left\{(1+\I)p^0\right\}^{|\alpha|} e^{-C_*(1+\I)p^0}.
\end{equation}
Finally, we combine \eqref{theta1} and \eqref{theta3} to obtain
\begin{align*}
&\left| \inti \intr \frac{1}{(1+\I)p^0\sqrt{\F_0}}\left|\pa^\alpha\nabla^2_{(n_\theta, u_\theta,\eta_\theta)}\mathcal{F}_\theta\right| g(p,\I) \phi(\I)\,d\I dp\right| \cr 
&\le   C\inti \intr \left\{1+(1+\I)p^0\right\}^{|\alpha|+1} e^{-C_*(1+\I)p^0} \left|g(p,\I)\right| \phi(\I)\,d\I dp \cr 
&\le C\|g\|_{L^2_{p,\I}}.
\end{align*}

\end{proof}

\begin{lemma}\label{nonlinear2}
Let $N\ge 3$.	Assume $f$ and $g$ are solutions to the perturbed relativistic BGK model \eqref{target_linearized}. For sufficiently small $\mathcal{E}(f)(t)$ and $\mathcal{E}(g)(t)$, we have
	$$
\left \langle  \Gamma(f)-\Gamma(g), f-g\right\rangle_{L^2_{p,\I}}\le
	C\left(\sqrt{\mathcal{E}(f)(t)}+\sqrt{E(g)(t)}\right)\|f-g\|^{2}_{L^2_{p,\I}}.$$
\end{lemma}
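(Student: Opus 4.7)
The plan is to exploit that $\Gamma$ vanishes at $f=0$ to at least second order in the macroscopic corrections, so that $\Gamma(f)-\Gamma(g)$ factors as a product of two small quantities: a difference of order $\|f-g\|_{L^2_{p,\I}}$ and a factor of order $\sqrt{\mathcal{E}(f)(t)}+\sqrt{\mathcal{E}(g)(t)}$. Once this is established in $L^2_{p,\I}$, pairing against $f-g$ yields the claimed bound directly by Cauchy--Schwarz. The argument closely follows the strategy of Lemma \ref{nonlinear1}, but in a difference form, so the book-keeping is the essential content.

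The first step is to derive the Lipschitz-type counterpart of Lemma \ref{lem1}. Writing $n_f-n_g$, $u_f-u_g$, and $\eta_f-\eta_g$ using the explicit formulas \eqref{n linear}, \eqref{linear u}, \eqref{linear eta}, each difference splits into a linear part which is a moment of $f-g$ weighted by $\sqrt{\F_0}$, plus a difference of nonlinear parts $N_n(f)-N_n(g)$, $N_u(f)-N_u(g)$, $N_\eta(f)-N_\eta(g)$. By the Sobolev bound \eqref{Sobolev} the denominators appearing in these expressions are uniformly bounded away from zero, so a direct telescoping shows
\[
|n_f-n_g|+|u_f-u_g|+|\eta_f-\eta_g|\le C\|f-g\|_{L^2_{p,\I}},
\]
while the nonlinear remainders gain an extra factor of smallness,
\[
|N_n(f)-N_n(g)|+|N_u(f)-N_u(g)|+|N_\eta(f)-N_\eta(g)|\le C\bigl(\sqrt{\mathcal{E}(f)(t)}+\sqrt{\mathcal{E}(g)(t)}\bigr)\|f-g\|_{L^2_{p,\I}},
\]
simply because each $N_{\ast}$ is polynomial in moments of $f$ (or $g$) with at least one quadratic factor, and bilinear differences of the form $A(f)B(f)-A(g)B(g)=A(f)(B(f)-B(g))+(A(f)-A(g))B(g)$ always isolate a small moment and a difference moment.

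The main obstacle is the Taylor remainder term in \eqref{Gamma}. To handle it, introduce along with $(n_\theta^f,u_\theta^f,\eta_\theta^f)$ a second interpolant $(n_\theta^g,u_\theta^g,\eta_\theta^g)$ and split
\[
(n_f-1,u_f,\eta_f-\eta_0)\nabla^2\F_{\theta}^{\,f}(n_f-1,u_f,\eta_f-\eta_0)^T - (n_g-1,u_g,\eta_g-\eta_0)\nabla^2\F_{\theta}^{\,g}(n_g-1,u_g,\eta_g-\eta_0)^T
\]
into two pieces: one in which the outer macroscopic vectors differ while the Hessian is evaluated at $(n_\theta^f,u_\theta^f,\eta_\theta^f)$, and one in which the outer vectors are fixed at $(n_g-1,u_g,\eta_g-\eta_0)$ while the Hessian factors differ. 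The first piece is controlled by the Lipschitz bound above together with the Gaussian decay \eqref{theta3} and the polynomial pointwise estimate on $\mathcal{Q}_{i,j}$ from the proof of Lemma \ref{nonlinear1}. For the second piece, write $\nabla^2\F_{\theta}^{\,f}-\nabla^2\F_{\theta}^{\,g}$ as the integral along a straight segment joining $(n_\theta^f,u_\theta^f,\eta_\theta^f)$ and $(n_\theta^g,u_\theta^g,\eta_\theta^g)$ of $\nabla^3\F_{\theta^\star}$; Lemmas \ref{lem5} and \ref{lem55} together with the same exponential-times-polynomial bound ensure that every coefficient involving $M^{(n)}$, $\widetilde{M}^{(n)}$, $\mathcal{X}$, and $(M^2+M^\prime\widetilde{M})^{-1}$ stays uniformly bounded along this segment, so the resulting integrand is dominated by $\{1+(1+\I)p^0\}^3 e^{-C_\ast(1+\I)p^0}$ times the difference $|n_f-n_g|+|u_f-u_g|+|\eta_f-\eta_g|\le C\|f-g\|_{L^2_{p,\I}}$.

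Assembling the pieces, each summand of $\Gamma(f)-\Gamma(g)$ is pointwise bounded in $L^2_{p,\I}$ by $C(\sqrt{\mathcal{E}(f)(t)}+\sqrt{\mathcal{E}(g)(t)})\|f-g\|_{L^2_{p,\I}}$ times a function of $(p,\I)$ in $L^2_{p,\I}(\sqrt{\F_0}\phi)$. A single application of Cauchy--Schwarz in $\langle\cdot,\cdot\rangle_{L^2_{p,\I}}$ against $f-g$ then produces the desired estimate. The entire argument is pointwise in $(t,x)$; the uniform Sobolev bound $\|f\|_{L^\infty_x L^2_{p,\I}}\le C\sqrt{\mathcal{E}(f)(t)}$ from \eqref{Sobolev} is exactly what converts the moment bounds into the global smallness factor $\sqrt{\mathcal{E}(f)(t)}+\sqrt{\mathcal{E}(g)(t)}$.
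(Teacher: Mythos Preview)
Your proposal is correct and follows essentially the same approach as the paper, which in fact omits the proof entirely with the remark that it is ``almost identical to the proof of Lemma \ref{nonlinear1} using the triangle inequality.'' Your telescoping of bilinear differences, the Lipschitz control on the macroscopic quantities, and the mean-value treatment of the Hessian remainder are precisely the details that flesh out that one-line indication.
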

\begin{proof}
The proof is almost identical to the proof of Lemma \ref{nonlinear1} using the triangle inequality, so we omit it for brevity.	
\end{proof}
%
%
%
%
%

\section{Proof of theorem \ref{main}}
The main idea of the proof is the nonlinear energy method developed in \cite{Guo whole,Guo VMB} which is now standard. We first establish a local solution in a short time, and then extend it to the global one through the micro-macro decomposition using the Poincar\'{e} inequality. 
\subsection{Local existence}
Consider the iteration scheme:
\begin{align*}
\partial_{t}F^{n+1}+\frac{p}{p^0}\cdot\nabla_{x}F^{n+1}&=\frac{1}{(1+\I)p^{0}}\left(\F_E(F^n)-F^{n+1}\right),\crcr
F^{n+1}(0,x,p,\I)&=F_{0}(x,p,\I),
\end{align*}
with $F^0(t,x,p,\I)=F_0(x,p,\I)$. Then $F^{n+1}$ is given as
\begin{equation}\label{F n+1}
F^{n+1}(t,x,p,\I)=e^{-\frac{t}{(1+\I)p^0}}F_0\big(x-\frac{p}{p^0}t,p,\I\big)+\int_0^t \frac{1}{(1+\I)p^0}e^{\frac{s-t}{(1+\I)p^0}}\F_E(F^n)\big(s,x-\frac{p}{p^0}(t-s),p,\I\big)\,ds.
\end{equation}
On the other hand, letting $F^{n+1}=\F_0+\sqrt{\F_0}f^{n+1}$, we have shown in Lemma \ref{linearization} that $\{f^n\}$ verifies
\begin{align}\label{5}\begin{split}
\partial_{t}f^{n+1}+\frac{p}{p^0}\cdot\nabla_{x}f^{n+1}&=\frac{1}{(1+\I)p^0}\left\{\mathcal{P}_0(f^n)-f^{n+1} \right\}+\Gamma(f^n),\cr 
f^{n+1}(0,x,p,\I)&=f_{0}(x,p,\I).
\end{split}\end{align}
\begin{lemma}\label{lem9}
	There exist $M_{0}>0$ and $T_{*}>0$ such that if $\mathcal{E}(f_0)<\frac{M_{0}}{2}$ then $\mathcal{E}(f^{n})(t)<M_{0}$ implies $\mathcal{E}(f^{n+1})(t)<M_{0}$ for $t\in [0,T_{*}].$
\end{lemma}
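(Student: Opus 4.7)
My plan is a classical weighted-energy bootstrap on the iterate $f^{n+1}$ driven by the linear transport--relaxation equation \eqref{5}, treating $f^n$ as a frozen source. For each multi-index $\alpha$ with $|\alpha|\le N$, I would differentiate \eqref{5} by $\partial^\alpha$ and pair the result against $\partial^\alpha f^{n+1}$ in $\langle\cdot,\cdot\rangle_{L^2_{x,p,\I}}$. The transport contribution vanishes after integration by parts on $\T^3$, and the damping piece
\[
\Big\langle \frac{\partial^\alpha f^{n+1}}{(1+\I)p^0},\,\partial^\alpha f^{n+1}\Big\rangle_{L^2_{x,p,\I}}
\]
is non-negative and can simply be discarded. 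It therefore suffices to bound the two contributions coming from $\mathcal{P}_0(\partial^\alpha f^n)/((1+\I)p^0)$ and $\partial^\alpha\Gamma(f^n)$.

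For the linear source, the explicit formula in Proposition \ref{linearization} shows $\mathcal{P}_0$ is a finite-rank operator whose five generators are Gaussian multiples of $\sqrt{\F_0}$; since dividing by $(1+\I)p^0\ge 1$ preserves the Gaussian tail, $\|\mathcal{P}_0(g)/((1+\I)p^0)\|_{L^2_{p,\I}}\le C\|g\|_{L^2_{p,\I}}$ uniformly in $x$, so Cauchy--Schwarz yields
\[
\Big|\Big\langle\frac{\mathcal{P}_0(\partial^\alpha f^n)}{(1+\I)p^0},\,\partial^\alpha f^{n+1}\Big\rangle_{L^2_{x,p,\I}}\Big|\le C\,\|\partial^\alpha f^n\|_{L^2_{x,p,\I}}\|\partial^\alpha f^{n+1}\|_{L^2_{x,p,\I}}.
\]
For the nonlinear source, applying Lemma \ref{nonlinear1} with $g=\partial^\alpha f^{n+1}(x,\cdot,\cdot)$ pointwise in $x$ and then integrating in $x$ by Cauchy--Schwarz produces
\[
\sum_{|\alpha|\le N}\Big|\big\langle\partial^\alpha\Gamma(f^n),\partial^\alpha f^{n+1}\big\rangle_{L^2_{x,p,\I}}\Big|\le C\,\mathcal{E}(f^n)(t)\,\sqrt{\mathcal{E}(f^{n+1})(t)}.
\]

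Summing over $|\alpha|\le N$ and applying Young's inequality delivers a differential inequality of the form
\[
\frac{d}{dt}\mathcal{E}(f^{n+1})(t)\le C_1\bigl(\mathcal{E}(f^n)(t)+\mathcal{E}(f^{n+1})(t)\bigr)+C_2\,\mathcal{E}(f^n)(t)\sqrt{\mathcal{E}(f^{n+1})(t)}.
\]
Under the hypothesis $\mathcal{E}(f^n)(t)<M_0$ and the bootstrap assumption $\mathcal{E}(f^{n+1})(t)\le M_0$, the right-hand side is bounded by $C_0 M_0(1+\sqrt{M_0})$; integrating from $0$ to $t$ and invoking $\mathcal{E}(f_0)<M_0/2$ produces $\mathcal{E}(f^{n+1})(t)<M_0$ once $T_*$ is chosen small depending only on $M_0$, closing the bootstrap. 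The principal technical wrinkle I anticipate is that $\alpha$ may contain time derivatives: since $\partial_t$ is not a free derivative on the phase space, one must exchange $\partial_t^{\alpha^0}$ for spatial derivatives inductively via \eqref{5} itself, producing extra commutator terms of the same $\mathcal{P}_0$ and $\Gamma$ shape already controlled. This bookkeeping, rather than any new analytic ingredient, is where the bulk of the work will sit.
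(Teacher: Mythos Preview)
Your proposal is correct and follows essentially the same route as the paper: differentiate \eqref{5} by $\partial^\alpha$, pair with $\partial^\alpha f^{n+1}$ in $L^2_{x,p,\I}$, discard the transport and damping terms by sign, bound the $\mathcal{P}_0$ contribution via its finite-rank Gaussian structure, and invoke Lemma~\ref{nonlinear1} for $\Gamma(f^n)$, arriving at a Gr\"onwall-type inequality closed by choosing $T_*$ small.

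One remark: the ``principal technical wrinkle'' you anticipate with time derivatives is not actually present. The energy functional $\mathcal{E}(f)$ is defined in the paper to include $\partial_t^{\alpha^0}$ directly, and since every coefficient in \eqref{5} is constant in $(t,x)$, the operator $\partial^\alpha$ commutes with the transport, damping, and $\mathcal{P}_0$ terms exactly; there are no commutators to track and no need to trade time derivatives for spatial ones via the equation. The paper simply applies $\partial^\alpha$ and proceeds, and you may do the same.
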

\begin{proof}
For \eqref{5},	we take $\partial^{\alpha}$ and take weighted inner product $\langle\cdot,\cdot\rangle_{L^2_{x,p,\I}}$ with  $\partial^{\alpha}f^{n+1}$ to get
\begin{align*}
&\frac12 \frac{d}{dt}\|\pa^\alpha f^{n+1}\|^2_{L^2_{x,p,\I}}+\inti \intro \frac{1}{(1+\I)p^0}|\pa^\alpha f|^2\phi(\I)\,d\I dp dx\cr 
&=\big\langle \frac{1}{(1+\I)p^0}\pa^\alpha \mathcal{P}_0(f^n),\pa^\alpha f^{n+1}\big\rangle_{L^2_{x,p,\I}}+\left\langle  \pa^\alpha \Gamma(f^n),\pa^\alpha f^{n+1}\right\rangle_{L^2_{x,p,\I}},
\end{align*}
where we used \eqref{F n+1} and the induction hypothesis so that
\begin{align*}
\big\langle  \frac{p}{p^0}\cdot\nabla_{x}\pa^\alpha f^{n+1},\pa^\alpha f^{n+1}\big\rangle_{L^2_{x,p,\I}}&=\frac 12\inti \intro \nabla_x \cdot \frac{p}{p^0}\left\{ \pa^\alpha f^{n+1}\right\}^2\phi(\I)\,d\I dp dx=0.
\end{align*}
Recalling \eqref{rearrange}, we deduce
\begin{align*}
\left| \big\langle \frac{1}{(1+\I)p^0}\pa^\alpha \mathcal{P}_0(f^n),\pa^\alpha f^{n+1}\big\rangle_{L^2_{x,p,\I}}\right|&\le C \into \|\pa^\alpha f^{n}\|_{L^2_{p,\I}} \left\{ \inti \intr \sqrt{\F_0}|\pa^\alpha f^{n+1}| \phi(\I)\, d\I dp\right\}dx\cr
&\le C \into \|\pa^\alpha f^{n}\|_{L^2_{p,\I}} \|\pa^\alpha f^{n+1}\|_{L^2_{p,\I}} dx\cr 
&\le C\left( \mathcal{E}(f^n)(t)+\mathcal{E}(f^{n+1})(t)\right),
\end{align*}
which together with Lemma \ref{nonlinear1} gives
\begin{align}\label{energy local}
& \frac{d}{dt}\|\pa^\alpha f^{n+1}\|^2_{L^2_{x,p,\I}}\le  C\left( \mathcal{E}(f^n)(t)+\mathcal{E}(f^{n+1})(t)\right)
\end{align}
where we have assumed $M_0$  is sufficiently small. Summing over $\alpha$ and integrating in time, we get
	\begin{align*}
	\mathcal{E}(f^{n+1})(t)&\le \mathcal{E}(f^{n+1})(0)+C\int_{0}^{t}\mathcal{E}(f^{n})(s)+\mathcal{E}(f^{n+1})(s)\,ds\cr
	&\le\frac{M_{0}}{2}+CT_{\ast}\left(\sup_{0\le s\le T_{\ast}}\mathcal{E}(f^{n})(s)+\sup_{0\le s\le T_{\ast}}\mathcal{E}(f^{n+1})(s)\right)
	\end{align*}
	which yields
	$$
	\left(1-CT_{\ast}\right)\sup_{0\le s\le T_{\ast}}\mathcal{E}(f^{n+1})(s)\le\left(\frac{1}{2}+CT_{\ast}\right)M_{0}.
	$$
	This gives the desired result for sufficiently small $T_{\ast}$.
\end{proof}
 \begin{theorem}\label{thm8}
	Let $N\geq3$. Assume that   $F_{0}=\mathcal{F}_0+\sqrt{\mathcal{F}_0}f_{0}$ is non-negative and it shares the same total mass, momentum, and energy as the global equilibrium $\mathcal{F}_0$, i.e. 
	\begin{equation}\label{initial condition}
	\inti\intro f_{0}\sqrt{\mathcal{F}_0} \phi(\I)\,d\I dpdx=0,\quad \inti\intro  (1+\I)p^\mu f_{0}\sqrt{\mathcal{F}_0} \phi(\I)\,d\I dpdx=0.
	\end{equation}
	Then there exist $M_{0}>0$ and $T_{*}>0$, such that if $T_{*}\le\frac{M_{0}}{2}$ and $\mathcal{E}(f_0)\le \frac{M_{0}}{2}$, there is a unique solution $F(t,x,p,\I)$ to the relativistic BGK \eqref{PMarle} such that
	\begin{enumerate}
		\item[(1)] The energy functional is continuous on $[0,T_{*}]$ and uniformly bounded:
		$$
		\sup_{0\le t\le T_{*}}\mathcal{E}(f)(t)\le M_{0}.
		$$
		\item[(2)] The distribution function remains positive in $[0,T_{*}]$:
		$$
		F(t,x,p,\I)=\mathcal{F}_0+\sqrt{\mathcal{F}_0}f(t,x,p,\I)\ge 0.
		$$
		\item[(3)] The total mass, momentum, and energy of perturbation parts vanishes:
		 $$
		 \inti\intro f\sqrt{\mathcal{F}_0} \phi(\I)\,d\I dpdx=0,\quad \inti\intro  (1+\I)p^\mu f\sqrt{\mathcal{F}_0} \phi(\I)\,d\I dpdx=0.
		 $$
	\end{enumerate}
\end{theorem}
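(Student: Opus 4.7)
The plan is to construct the local solution $F$ as the strong limit of the iteration $\{F^{n+1}\}=\{\F_0+\sqrt{\F_0}f^{n+1}\}$ defined in \eqref{5}. By Lemma \ref{lem9} together with $\me(f_0)\le M_0/2$, I inductively obtain the uniform bound $\sup_{0\le t\le T_*}\me(f^n)(t)\le M_0$ for all $n$. To upgrade this to a strong limit, I would analyze the difference $g^{n+1}:=f^{n+1}-f^n$, which satisfies
\begin{align*}
\pa_t g^{n+1}+\frac{p}{p^0}\cdot\nabla_x g^{n+1}=\frac{1}{(1+\I)p^0}\left\{\mathcal{P}_0(g^n)-g^{n+1}\right\}+\{\Gamma(f^n)-\Gamma(f^{n-1})\}.
\end{align*}
Testing against $g^{n+1}$ in $L^2_{x,p,\I}$ (the transport term vanishes by periodicity in $x$, the $g^{n+1}$ term gives a non-negative sink, $\mathcal{P}_0(g^n)$ is controlled through the explicit representation \eqref{rearrange}, and the nonlinear difference is absorbed via Lemma \ref{nonlinear2}) and applying Gr\"onwall produces
\begin{align*}
\sup_{0\le t\le T_*}\|g^{n+1}(t)\|^2_{L^2_{x,p,\I}}\le CT_*\sup_{0\le t\le T_*}\|g^n(t)\|^2_{L^2_{x,p,\I}}.
\end{align*}
Shrinking $T_*$ makes this a genuine contraction, so $\{f^n\}$ is Cauchy in $C([0,T_*];L^2_{x,p,\I})$. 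Interpolating with the uniform higher-order bound yields $f\in L^\infty_t H^N\cap C_t L^2_{x,p,\I}$ with $\sup_t\me(f)(t)\le M_0$, and continuity of $\me(f)(t)$ together with uniqueness follows by applying the same energy computation to $f$ itself and to two candidate solutions.

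For assertion (2), I would pass to the limit in the mild formulation \eqref{F n+1}. Inductively, whenever $F^n\ge 0$ with $\me(f^n)$ small, Lemma \ref{lem1} guarantees $n[F^n]\ge 1/2$ and $\gamma[F^n]$ inside a compact subset of $(0,\infty)$, so $\F_E(F^n)=\frac{n[F^n]}{M(\gamma[F^n])}\exp(\cdots)\ge 0$, and \eqref{F n+1} then exhibits $F^{n+1}$ as a sum of two manifestly non-negative terms; since $F^0=F_0\ge 0$, the pointwise a.e.\ limit satisfies $F\ge 0$. For (3), I would integrate \eqref{PMarle} over $\T^3\times\R^3\times[0,\infty)$ against $\phi(\I)$ and against $(1+\I)p^\mu\phi(\I)$. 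The transport part vanishes by periodicity in $x$, while the collision side vanishes by the cancellation identities \eqref{cancellation}, which are built into the limit $F$ because $\gamma[F]$ is defined through \eqref{gamma}. Thus
\begin{align*}
\frac{d}{dt}\inti\intro f\sqrt{\F_0}\phi(\I)\,d\I\,dp\,dx=0,\qquad \frac{d}{dt}\inti\intro (1+\I)p^\mu f\sqrt{\F_0}\phi(\I)\,d\I\,dp\,dx=0,
\end{align*}
so the hypothesis \eqref{initial condition} propagates the desired orthogonality for every $t\in[0,T_*]$.

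The main obstacle is the contraction estimate itself. At this local stage no coercivity of the linearized operator is invoked, since the refined $(I-\mathcal{P})$-control of Proposition \ref{linear property} is reserved for the global argument; all smallness must therefore come from the time factor $T_*$. In particular the term $(1+\I)^{-1}(p^0)^{-1}\mathcal{P}_0(g^n)$ has no definite sign and must be handled by the crude bound
\begin{align*}
\Bigl|\bigl\langle\tfrac{1}{(1+\I)p^0}\mathcal{P}_0(g^n),\,g^{n+1}\bigr\rangle_{L^2_{x,p,\I}}\Bigr|\le C\|g^n\|_{L^2_{x,p,\I}}\|g^{n+1}\|_{L^2_{x,p,\I}},
\end{align*}
forcing $T_*$ to depend on $M_0$. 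This is acceptable locally, and precisely highlights why the global continuation of Theorem \ref{main} will need the sharper coercivity of Proposition \ref{linear property} combined with the micro-macro decomposition.
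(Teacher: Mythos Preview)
Your proposal is correct and complete in outline, but it takes a genuinely different route from the paper on the existence step. The paper does not run a contraction argument on $g^{n+1}=f^{n+1}-f^n$; instead it invokes the Rellich--Kondrachov compactness theorem on the periodic box, using only the uniform bound $\sup_{t}\me(f^n)(t)\le M_0$ from Lemma~\ref{lem9} to extract a subsequential limit, and then derives continuity of $\me(f)$ directly from the differential inequality \eqref{energy local}. Your Cauchy-sequence approach is more constructive and makes the passage to the limit in the nonlinear term explicit, at the price of one extra estimate. Note, however, that Lemma~\ref{nonlinear2} as stated only controls $\langle\Gamma(f)-\Gamma(g),f-g\rangle$, whereas you need $\langle\Gamma(f^n)-\Gamma(f^{n-1}),g^{n+1}\rangle$ with $g^{n+1}\neq f^n-f^{n-1}$; the extension to an arbitrary second argument is routine (same proof as Lemma~\ref{nonlinear1}), but you should flag it. For uniqueness the paper actually \emph{does} use the coercivity of Proposition~\ref{linear property}(3), contrary to your remark that it is ``reserved for the global argument''; your cruder Gr\"onwall-only version works just as well here because $g(0)=0$, so this is a harmless deviation. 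Parts (2) and (3) of your argument match the paper's.
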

\begin{proof}
Due to the Rellich–Kondrachov compactness theorem for periodic domains, Lemma \ref{lem9} implies the existence of local-in-time  solutions to the iteration scheme \eqref{5} satisfying (1). The continuity of $\mathcal{E}$ can be easily obtained by \eqref{energy local} due to the uniform bound (1). The non-negativity of the distribution function can be guaranteed by the relation \eqref{F n+1} and the initial assumption $F_0\ge 0$.  To prove the uniqueness of the solution, we assume that  $\tilde{F}=\mathcal{F}_0+\sqrt{\mathcal{F}_0}\tilde{f}$ is another local solution corresponding to the same initial data $F_0$. Letting $g=f-\tilde{f}$, then $g$ verifies
\begin{align*}
\partial_{t}g+\frac{p}{p^0}\cdot\nabla_{x}g&=\frac{1}{(1+\I)p^0}\left\{\mathcal{P}_0(g)-g \right\}+\Gamma(f)-\Gamma(\tilde{f}),\cr 
g(0,x,p,\I)&=0.
\end{align*}
	Taking the weighted inner product $\langle \cdot,\cdot\rangle_{L^2_{x,p,\I}}$ with $g$ and applying Proposition \ref{linear property} and Lemma \ref{nonlinear2}, we get 
	\begin{align*}
	\frac{1}{2}\frac{d}{dt}\|g\|^{2}_{L^2_{x,p,\I}}+C\|(I-\mathcal{P})g\|_{L^2_{x,p,\I}}\le C\left\{\big(\mathcal{E}(f)(t)\big)^{\frac12}+\big(\mathcal{E}(\tilde{f})(t)\big)^{\frac12}\right\}\|g\|^{2}_{L^2_{x,p,\I}},	\end{align*}
and thus
	$$
\|g(t)\|^{2}_{L^2_{x,p,\I}}\le  e^{Ct}\|g(0)\|^{2}_{L^2_{x,p,\I}}
	$$
	which gives the uniqueness of solutions. Finally, since the local solution is smooth, it follows from the relation \eqref{cancellation} that
	$$
	\frac{d}{dt}\inti\intro f\sqrt{\mathcal{F}_0} \phi(\I)\,d\I dpdx=0,\quad \frac{d}{dt}\inti\intro  (1+\I)p^\mu f\sqrt{\mathcal{F}_0} \phi(\I)\,d\I dpdx=0
	$$
	which together with \eqref{initial condition} gives (3).
\end{proof}

\subsection{Global existence and asymptotic behavior of solutions}

Recall from \eqref{Pf} that $\mathcal{P}$ is an orthonormal projection defined by
$$
\mathcal{P}(f)=\langle f,e_1\rangle_{L^2_{p,\I}} e_1+\langle f,e_{2,3,4} \rangle_{L^2_{p,\I}} \cdot e_{2,3,4}+ \langle f,e_{5}\rangle_{L^2_{p,\I}} e_5,
$$
where
$$
e_1=\sqrt{\F_0},\qquad  e_{1+i}=\frac{(1+\I)p^i\sqrt{\F_0}}{\|(1+\I)p^i\sqrt{\F_0}\|_{L^2_{p,\I}}},  \qquad e_5=\frac{(1+\I)\left(p^0-\delta\right)\sqrt{\F_0}}{\|(1+\I)(p^0-\delta )\sqrt{\F_0}\|_{L^2_{p,\I}}}.
$$
This can be expressed in the form of 
$$
\mathcal{P}(f)=a_f(t,x)\sqrt{\F_0}+b_f(t,x)\cdot (1+\I)p\sqrt{\F_0}+c_f(t,x) \left\{(1+\I)p^0-\delta\right\}\sqrt{\F_0} 
$$
Note from Proposition \ref{linear property} that  $Ker(\mathcal{L})=\{\sqrt{\mathcal{F}_0},(1+\I)p^\mu\sqrt{\mathcal{F}_0} \}$, yielding $\mathcal{L}[\mathcal{P}(f)]=0$, so substituting $f=\mathcal{P}(f)+\{I-\mathcal{P}\}(f)$ into \eqref{target_linearized} leads to
\begin{align*}
\left\{\partial_{t}+\frac{p}{p^0}\cdot\nabla_{x}\right\}\mathcal{P}(f)&=\left\{-\partial_{t}-\frac{p}{p^0}\cdot\nabla_{x}+\mathcal{L}\right\}\{I-\mathcal{P}\}(f)+\Gamma (f)\cr
&=: l\{I-\mathcal{P}\}(f)+h(f).
\end{align*}
Computing the left-hand side, we get
\begin{align*}
&\left\{\partial_{t}+\frac{p}{p^0}\cdot\nabla_{x}\right\}\mathcal{P}(f)\cr
&=\left\{\partial_{t}+\frac{p}{p^0}\cdot\nabla_{x}\right\}\left\{a_f\sqrt{\F_0}+b_f\cdot (1+\I)p\sqrt{\F_0}+c_f \left\{(1+\I)p^0-\delta\right\}\sqrt{\F_0}\right\}\cr
&=\big(\partial_{t}\tilde{a}_f\big)e_{a_0}+\sum_{i=1}^{3}\big(\partial_{x_{i}}\tilde{a}_f\big)e_{a_i}+\sum_{i=1}^{3} (\partial_{t}b_{f_i}+\partial_{x_{i}}c_f)e_{bc_i}
+\sum_{j=1}^{3}\sum_{i=1}^{3}(\partial_{x_{i}}b_{f_j})e_{ij}+(\partial_{t}c_f)e_c
\end{align*}
where 
$$
\tilde{a}_f:=a_f-\delta c_f
$$ 
and  $e_{a_0},\cdots, e_{c}$ denote 
\begin{equation}\label{basis}
\{e_{a_0}, e_{a_i}, e_{bc_i}, e_{ij}, e_{c}\}
=\Big\{\sqrt{\F_0},\ \frac{p^{i}}{p^{0}}\sqrt{\F_0},\ (1+\I)p^{i}\sqrt{\F_0},\ (1+\I)\frac{p^{i}p^{j}}{p^{0}}\sqrt{\F_0},\ (1+\I)p^{0}\sqrt{\F_0}\Big\}.
\end{equation}
  Let $l_{a_0}\cdots l_{c}$ and $h_{a_0},\cdot,h_{c}$ denote the coefficients of $l\{(I-\widetilde{P})(f))\}$
	and $h(f)$ with respect to (\ref{basis}) respectively. Then we have
	\begin{enumerate}
		\item $\partial_{t}\tilde{a}=l_{a_0}+h_{a_0}$
		\item $\partial_{t}c=l_{c}+h_{c}$
		\item $\partial_{t}b_{i}+\partial_{x_{i}}c=l_{bc_i}+h_{bc_i}$
		\item $\partial_{x_{i}}\tilde{a}=l_{ai}+h_{ai}$
		\item $(1-\delta_{ij})\partial_{x_{i}}b_{j}+\partial_{x_{j}}b_{i}=l_{ij}+h_{ij}$
	\end{enumerate}
Notice that the system above is exactly the same as in the case of the relativistic BGK model of Anderson and Witting, see \cite{HY2}. Thus, following the standard analysis developed in \cite{Guo whole,Guo VMB} gives
$$
\sum_{|\alpha|\le N}\left\|\pa^\alpha \mathcal{P}(f) \right\|^2_{L^2_{x,p,\I}} \le C \sum_{|\alpha|\le N}\left\{\left\|\pa^\alpha \{I-\mathcal{P}\}(f) \right\|^2_{L^2_{x,p,\I}}+\sqrt{\mathcal{E}(f)(t)} \left\|\pa^\alpha f \right\|^2_{L^2_{x,p,\I}}\right\}
$$
where we used the Poincar\'{e} inequality to the lowest order term in the micro-macro decomposition thanks to Theorem \ref{thm8} (3). This, combined with the coercive property presented in Proposition \ref{linear property} (3), yields
	\begin{equation}\label{coercive}
	\sum_{|\alpha|\le N}\langle \mathcal{L}\partial^{\alpha}f,\partial^{\alpha}f\rangle_{L^2_{x,p,\I}}\le-\lambda_0\sum_{|\alpha|\le N}\|\partial^{\alpha}f\|^{2}_{L^2_{x,p,\I}}
	\end{equation}
	for some positive constant $\lambda_0$. Now we apply $\partial^{\alpha}$ to (\ref{target_linearized}), take an inner product with $\partial^{\alpha}f$ with respect to $\langle\cdot,\cdot\rangle_{L^2_{x,p,\I}}$, and use Lemma \ref{nonlinear1} and \eqref{coercive} to obtain 
\begin{align*}
&\frac{1}{2}\frac{d}{dt}\sum_{|\alpha|\le N}\|\partial^{\alpha} f\|^{2}_{L^2_{x,p,\I}}+\lambda_0 \sum_{|\alpha|\le N}\|\partial^{\alpha} f\|^{2}_{L^2_{x,p,\I}}\le  C\sqrt{\mathcal{E}(f)(t)}\mathcal{E}(f)(t),
\end{align*}
i.e.
$$
\frac{d}{dt}\mathcal{E}(f)(t)+2\lambda_0\mathcal{E}(f)(t) \le  C\sqrt{\mathcal{E}(f)(t)}\mathcal{E}(f)(t).
$$
	We define
	$$
	M=\min\biggl\{\frac{\lambda_0^2}{C^2},M_0\biggl\}
	$$
	and choose the initial data sufficiently small in the following sense:
	$$
	\mathcal{E}(f_{0})\le  \frac{M}{2} < M_{0}.
	$$
	Let $T>0$ be given as
	$$
	T=\sup_{t}\big\{t:\mathcal{E}(f)(t)\le M\big\}
	$$
	which implies
	$$
	\mathcal{E}(f)(t)\le M \le  M_{0}.
	$$
	We then have for $0\le t\le T$
	\begin{align}\label{timedecay}
	\begin{split}
\frac{d}{dt}\mathcal{E}(f)(t)+2\lambda_0\mathcal{E}(f)(t) \le  C\sqrt{\mathcal{E}(f)(t)}\mathcal{E}(f)(t)\le \lambda_0 \mathcal{E}(f)(t),
	\end{split}\end{align}
and hence
	\begin{align*}
\mathcal{E}(f)(t)+\lambda_0 \int_0^t\mathcal{E}(f)(s)\,ds \le \mathcal{E}(f_0) \le \frac{M}{2}<M_0,
	\end{align*}
Due to the continuity of $\mathcal{E}(f)(t)$,  we conclude that $T=\infty$. Finally, applying Gr\"{o}nwall's inequality to  \eqref{timedecay} gives the exponential decay:
$$
\mathcal{E}(f)(t)\le  e^{-\lambda_0 t}\mathcal{E}(f_0)
$$ 
which completes the proof.

	\section*{Acknowledgments}
		This research was funded by a 2023 Research Grant from Sangmyung University (2023-A000-0285).
	


	%
	%
	%
	%

	\bibliographystyle{amsplain}

\end{document}